\UseRawInputEncoding
\documentclass[leqno]{amsart}
\usepackage{stmaryrd,graphicx}
\usepackage{amssymb,mathrsfs,amsmath,amscd,amsthm,color}
\usepackage{float}
\usepackage[all,cmtip]{xy}
\DeclareMathAlphabet{\mathpzc}{OT1}{pzc}{m}{it}
\usepackage{amsfonts,latexsym,wasysym,tipa}
\newcommand{\im}{\operatorname{Im}}

\renewcommand{\int}{\operatorname{int}}

\newcommand{\ds}{\displaystyle}
\newcommand{\ui}{[0,1]}

\newcommand{\mcc}{\mathcal{C}}

\newcommand{\wild}{\mathbf{w}}

\newcommand{\scrc}{\mathscr{C}}

\newcommand{\scru}{\mathscr{U}}
\newcommand{\scrv}{\mathscr{V}}

\newcommand{\bbw}{\mathbb{W}}

\newcommand{\bbd}{\mathbb{D}}
\newcommand{\bbe}{\mathbb{E}}

\newcommand{\bbn}{\mathbb{N}}

\newcommand{\bbr}{\mathbb{R}}

\newcommand{\ov}{\overline}
\newcommand{\wt}{\widetilde}

\DeclareMathOperator{\diam}{diam}

\newtheorem{theorem}{Theorem}[section]
\newtheorem{lemma}[theorem]{Lemma}

\newtheorem{corollary}[theorem]{Corollary}
\theoremstyle{definition}\newtheorem{definition}[theorem]{Definition}

\newtheorem{remark}[theorem]{Remark}

\title{Countable models of one-dimensional Peano continua}

\author[J. Brazas]{Jeremy Brazas}
\address{West Chester University\\ Department of Mathematics\\
West Chester, PA 19383, USA}
\email{jbrazas@wcupa.edu}

\author[P. Pave\v si\' c]{Petar Pave\v si\' c}
\address{Faculty of Mathematics and Physics, University of Ljubljana and Institute of Mathematics, Physics and Mechanics, Ljubljana}
\email{petar.pavesic@fmf.uni-lj.si}

\subjclass[2010]{Primary 55Q52,54F50 ; Secondary 54D10, 20F34}
\keywords{countable topological space, one-dimensional space, Peano continuum, weak homotopy equivalence}
\thanks{This research was supported by the Slovenian Research Agency program P1-0292 and grant J1-4001.}

\begin{document}

\begin{abstract}
In this paper, we prove that a one-dimensional Peano continuum $X$ admits a weak homotopy equivalence $X\to C$ to a countable space $C$ if and only if the wild set $\mathbf{w}(X)$ is countable. 
\end{abstract}

\maketitle

\section{Introduction}

The homotopy theory of finite spaces was initiated by McCord \cite{mccord} and Stong \cite{stong} in 1966. It is a well-known consequence of this work that every finite topological space has the weak homotopy type of a finite polyhedron and, conversely, that every finitely polyhedron is weakly homotopy equivalent to a finite space. In recent decades, finite spaces have been useful in homotopy theory by providing small (often minimal) models for weak homotopy types \cite{barmak,Barmak2,Barmark3,Chari,Forman,quillen}. More generally, any polyhedron $K$ is weakly homotopy equivalent to an $A$-space\footnote{A topological space $X$ is an \textit{$A$-space} if arbitrary intersections of open sets in $X$ are open.} $X$. Whether or not one can choose $X$ to be countable depends on whether or not the homotopy groups of $K$ are countably presented.

A \textit{Peano continuum} is a connected, locally path-connected compact metrizable space. Such spaces have cardinality of the continuum whenever they contain more than a single point. However, they need not be locally contractible and typically have uncountable homotopy groups. In \cite{brazasmatosinvolve} it is shown that there exists a countable space with an uncountable fundamental group, which appears to be a model (of weak homotopy type) for the infinite earring space $\bbe$ (see Figure \ref{fig1}). Following this example, it is not difficult to construct potential countable models for other simple one-dimensional Peano continua. However, it is far less clear if there exist Peano continua which \textit{are not} weakly homotopy equivalent to countable spaces. This brings to light following question: \textbf{which Peano continua are weakly homotopy equivalent to a space with countably many points?}

In this paper, we provide a complete answer to this question in the case of one-dimensional Peano continua. Although some higher dimensional situations can also be analyzed, we focus our attention on one-dimensional Peano continua since it is the context where the most tools have been developed. One should not expect for $A$-spaces to be useful in addressing this question since countable $A$-spaces have countable homotopy groups. 

Outside of the realm of polyhedra, various types of ``wildness" play an important role in distinguishing and classifying (weak) homotopy types. We say that a point $x$ in a metrizable space $X$ is a \textit{wild point of $X$} if every neighborhood of $x$ contains the image of a loop based at $x$ that is not null-homotopic in $X$. We let $\wild(X)$ denote the subspace of $X$ consisting of the wild points of $X$. When $X$ is a (one-dimensional) Peano continuum, it is known that $\wild(X)$ may be homeomorphic to an arbitrary (one-dimensional) compact metric space \cite{BMhigherwild}.


\begin{theorem}\label{mainthm}
Let $X$ be a one-dimensional Peano continuum. There exists a countable space $Y$ and a weak homotopy equivalence $f:X\to Y$ if and only if $\wild(X)$ is countable. 
\end{theorem}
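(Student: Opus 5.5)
The proof has two directions; the hard one is constructing a countable model when $\wild(X)$ is countable.

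\emph{Necessity.} Suppose $f:X\to Y$ is a weak homotopy equivalence with $Y$ countable. I will show that $f$ must be injective on $\wild(X)$, which makes $\wild(X)$ countable. Take distinct wild points $x\neq x'$ with $f(x)=f(x')$. Let $\alpha$ be an arc from $x$ to $x'$; then $f\circ\alpha$ is a loop in $Y$. Since $\pi_1(f)$ is onto, $f\circ\alpha$ is homotopic to $f\circ\beta$ for some loop $\beta$ at $x$. Replacing $\alpha$ by $\beta^{-}\cdot\alpha$ gives a path $\gamma$ from $x$ to $x'$ with $f\circ\gamma$ null-homotopic.

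Next, form the loops $\gamma\cdot\ell'_n\cdot\gamma^{-}\cdot\ell_n$, where $\ell_n,\ell'_n$ are essential loops at $x,x'$ shrinking to the base points. Infinite concatenations of such loops, and their images under $f$, are controlled because $f$ is continuous: small loops at $x$ map to small loops at $f(x)$. The key fact is that one-dimensional Peano continua have $\pi_1$ injecting into the shape group, and such spaces have a "homotopically Hausdorff" rigidity. Using it, I build $2^{\aleph_0}$ distinct classes in $\pi_1(X,x)$ of the form $\prod_n (\gamma \ell'_n\gamma^{-})^{\epsilon_n}$ whose images in $\pi_1(Y)$ all agree, or equivalently are trivial. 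This would contradict injectivity of $\pi_1(f)$.

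The real issue is continuity of the infinite concatenation in $Y$. The conjugates $\gamma\ell'_n\gamma^{-}$ do not shrink in $X$, but their $f$-images are homotopic to $f(\ell'_n)$-type loops at $f(x)=f(x')$, and those do shrink. So the infinite product $\prod f(\ell'_n)^{\epsilon_n}$ is a genuine loop in $Y$. The product $\prod \gamma\ell'_n\gamma^{-}$, however, is not a genuine loop in $X$; I would instead use $\gamma\cdot\prod (\ell'_n)^{\epsilon_n}\cdot\gamma^{-}$ and compare its image with $\prod (f\ell_n)^{\delta_n}$ obtained from surjectivity. Refining this into a contradiction with injectivity, via a standard reduced-word argument in one-dimensional continua as in Eda's work, is the step I expect to be delicate.

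\emph{Sufficiency.} Suppose $\wild(X)$ is countable. Away from $\wild(X)$, $X$ is locally a graph up to homotopy. By the known structure theory, $X$ is homotopy equivalent to a one-dimensional Peano continuum that is a dendrite-like core $D$, in which every point outside a countable closed set $W$ has a neighborhood that is an open arc or a finite graph. Concretely, $X$ is an inverse limit of finite graphs, or equivalently $D$ is obtained from a dendrite by attaching countably many arcs. The steps are:

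\begin{enumerate}
\item Reduce $X$, up to homotopy, to a "generalized graph": a space built from the countable closed set $W=\wild(X)$, viewed as a countable compact metric space, together with countably many edges (arcs) whose diameters tend to $0$ and which accumulate only on $W$.
\item Build $Y$ by keeping $W$ as a subspace and replacing each open edge by a finite non-Hausdorff space weakly equivalent to an open interval, namely McCord's model of an edge: the four-point or five-point "pseudo-circle"-style zigzag. Topologize $Y$ so that neighborhoods of $w\in W$ are inherited from $X$: a basic open set around $w$ contains $W\cap U$ plus the model pieces of all edges lying in $U$.
\item Define the map $f:X\to Y$ edgewise using McCord maps, and as the identity on $W$. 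Check continuity using the condition that diameters tend to $0$.
\item Prove that $f$ is a weak equivalence by McCord's theorem on basis-like open covers. The preimage of each minimal-type basic open set must be weakly contractible or weakly equivalent to the corresponding set in $Y$. Around wild points these sets are not contractible, so I cannot invoke McCord directly. Instead, I would compare $\pi_n$ via a lifting argument: any map $S^n\to Y$ or homotopy into $Y$ can be deformed to avoid nothing but a countable set, and on the edges it lifts by McCord. Then $\pi_1$ is handled through the description of $\pi_1$ of one-dimensional Peano continua as reduced infinite words, and higher $\pi_n$ vanish on both sides, with asphericity of $Y$ proved by the same lifting.
\end{enumerate}

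The main obstacle is the last step: establishing that $\pi_1(f)$ is bijective at wild points, where countability of $W$ is used to push homotopies off $W$ (a map from a disk cannot have image "spread" over uncountably many points in a countable set except through edges).
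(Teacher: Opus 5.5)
Your overall architecture matches the paper's: for necessity, show that a $\pi_1$-isomorphism out of $X$ is injective on $\wild(X)$; for sufficiency, pass to standard form and collapse each open arc to a coarse arc. However, the decisive step is missing in both directions.

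\textbf{Necessity.} Your normalization, a path $\gamma$ from $x$ to $x'$ with $f\circ\gamma$ null-homotopic, is correct and useful, but nothing after it produces a contradiction. Write your loops as $g_0\circ\ell_n$ and $g_1\circ\ell_n$ for fully essential maps $g_0:(\bbe,b_0)\to(X,x)$ and $g_1:(\bbe,b_0)\to(X,x')$. Products built only from the $g_1\circ\ell_n$ have known preimages under $f_{\#}$ (conjugate by $\gamma$). So comparing $\gamma\cdot\prod_n(g_1\circ\ell_n)^{\epsilon_n}\cdot\ov{\gamma}$ with some $\prod_n(f\circ g_0\circ\ell_n)^{\delta_n}$ yields no relation. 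Moreover, since $f_{\#}$ is injective by hypothesis, you cannot exhibit ``$2^{\aleph_0}$ classes with the same image'' without further input. The contradiction lives in interleaved products such as $\prod_n(f\circ g_0\circ\ell_n)(f\circ g_1\circ\ell_n)$, which are loops in $Y$ only because $f(x)=f(x')$. Surjectivity gives them preimages at $x$, but you have no control over what those preimages look like in $X$. What you need is that $\psi=f_{\#}^{-1}\circ h_{\#}:\pi_1(\bbe,b_0)\to\pi_1(X,x)$, with $h$ the interleaved map, is induced by a map up to base-point change. That is Eda's Theorem \ref{edaspatialthm}, which the paper combines with Lemma \ref{noslidinglemma} and an $H_1$/zero-form argument. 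With your $\gamma$ the finish is even shorter. Here $\psi$ sends $[\ell_{2j-1}]\mapsto[g_0\circ\ell_j]$ and $[\ell_{2j}]\mapsto[\gamma(g_1\circ\ell_j)\ov{\gamma}]$. So if $\psi=\varphi_{\delta}\circ g_{\#}$, Lemma \ref{noslidinglemma} applied to the odd and even sub-earrings of $g$, with the paths $\delta$ and $\delta\gamma$, forces both $g(b_0)=x$ and $g(b_0)=x'$. Without such an automatic-continuity theorem (which your ``reduced-word argument'' would have to reprove), the necessity direction is not established.

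\textbf{Sufficiency.} Your $Y$ is essentially the paper's coarsification $\scrc X$, but it needs two repairs.
First, define $Y$ as the quotient of $X$. Taken literally, your basis at $w\in W$ ($W\cap U$ plus the pieces of edges lying entirely in $U$) never contains the open point adjacent to an endpoint $a_n$, which detaches each coarse edge from $W$.
Second, after replacing $X$ by a standard-form model, you must check that its wild set is still countable; the paper does this by applying Lemma \ref{techlemma} to the homotopy inverse.

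More seriously, the weak equivalence is left unproved, and the obstacle you name is not the real one. Countability of $W$ plays no role in it (it only makes $\scrc X$ countable), and no reduced-word analysis of $\pi_1$ is needed. The key is the relative lifting property of Lemma \ref{liftinglemma}, which runs as follows.
\begin{enumerate}
\item Given $g:B\to\scrc X$ and a partial lift on a closed $A$, lift over $g^{-1}(W\cup\{c_n\})$ using that $q$ is a homeomorphism on that closed set.
\item Each $g^{-1}(b_n)\setminus A$ is open, and its frontier lies where the lift is already defined and over $\{a_n,b_n,c_n\}$.
\item Perfect normality then extends the lift over the closure into the arc $\ov{B_n}$, with the open set going into $B_n$; the same works for $d_n$.
\item Continuity at points over $W$ comes from $\diam(\ov{O_n})\to 0$ and openness of $q$ (Lemmas \ref{odiameterlemma}, \ref{opencharlemma}, \ref{openlemma}).
\end{enumerate}
Applying this to $(I^{n+1},I^n\times\{0\})$ and $(I^{n+1},\partial I^{n+1})$ gives the weak equivalence. ``Lifting on the edges by McCord'' does not supply this; the explicit Urysohn-type lift and the continuity check at wild points are the substance of the argument.
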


To prove one direction of Theorem \ref{mainthm}, we show that if $f:X\to Y$ is a weak homotopy equivalence from a one-dimensional Peano continuum $X$ to any space $Y$, then $|\wild(X)|\leq|\wild(Y)|$ (see Lemma \ref{techlemma}). Thus $|\wild(X)|$ provides a lower bound for the cardinality of any space having the weak homotopy type of $X$. For the other direction, we begin with a one-dimensional Peano continuum $X$ with countable wild set $\wild(X)$. Using a result of Meilstrup \cite{Meilstrup}, we may assume that $X\backslash \wild(X)$ is a disjoint union of open arcs. We then construct a quotient map $X\to \scrc X$ that is bijective on $\wild(X)$ and which collapses each open arc of $X\backslash\wild(X)$ to a 3-point quotient space that serves as a ``coarse open arc." The resulting space $\scrc X$ is countable, first countable, locally path-connected, and $T_0$. We complete the proof by showing that quotient map is a weak homotopy equivalence (see Corollary \ref{coarsificationcor}). 

Theorem \ref{mainthm} excludes the possibility that one-dimensional Peano continua with uncountable wild sets, such as the Sierpisnki Triangle, Sierpinski Carpet, or Menger Curve, have the weak homotopy type of a countable space. Additional examples from \cite{BFTestMap} include the Dyadic Arc Space $\bbd$ (see Figure \ref{fig2}) and a ``minimal" example $\mathbb{W}$ for which $\wild(\bbw)$ is homeomorphic to the ternary Cantor set (see Figure \ref{fig3}).

\begin{figure}[H]
\centering \includegraphics[height=2in]{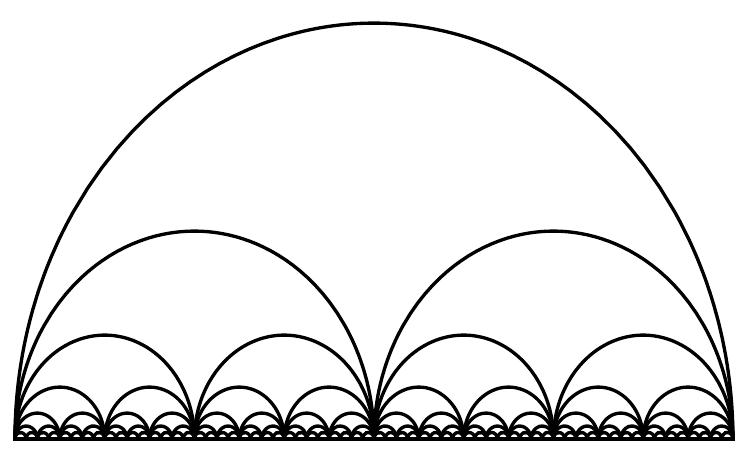}
\caption{\label{fig2} The dyadic arc-space $\mathbb{D}$ whose wild set $\wild(\mathbb{D})$ is an arc. Since $\wild(\mathbb{D})$ is uncountable, there is no countable space weakly homotopy equivalent to $\bbd$.}
\end{figure}

\begin{figure}[H]
\centering \includegraphics[height=1.1in]{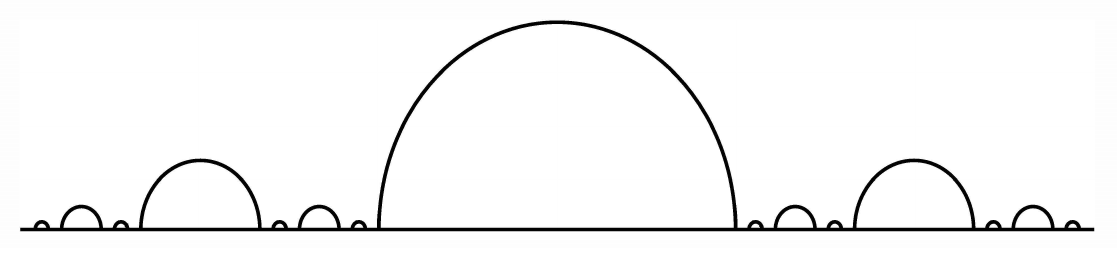}
\caption{\label{fig3} A one-dimensional Peano continuum $\mathbb{W}$ whose wild set $\wild(\mathbb{W})$ is the ternary Cantor set. Since $\wild(\mathbb{W})$ is uncountable, there is no countable space weakly homotopy equivalent to $\bbw$.}
\end{figure}

Since every countable compact metric space is homeomorphic to a countable ordinal, we can further classify the spaces to which Theorem \ref{mainthm} applies in terms of homeomorphism types of compact countable ordinals. Classifications of homeomorphism types of ordinals are known \cite{KBordinal}, including specialized classifications of homeomorphism types of countable ordinals \cite{Milliet}. 

\begin{theorem}\label{maincor}
There is a canonical bijection between homeomorphism types of compact countable ordinals and homotopy types of one-dimensional Peano continua that admit a weak homotopy equivalence to a countable space.
\end{theorem}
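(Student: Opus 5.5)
The bijection will send a one-dimensional Peano continuum $X$ with countable wild set to the homeomorphism type of $\wild(X)$, and I will prove that this is well defined, injective and surjective. I first note the allowed values. Since $X$ is compact metrizable and $\wild(X)$ is closed in $X$ (a limit of wild points is wild: a small neighborhood of a limit point contains a neighborhood of a nearby wild point, hence an essential loop, which can be conjugated by a short path), $\wild(X)$ is a compact countable metric space. A compact countable metric space is homeomorphic to a compact countable ordinal by the classical Mazurkiewicz--Sierpi\'nski theorem, possibly the empty one. I will treat the empty wild set separately: then $X$ is homotopy equivalent to a finite graph, and under the convention "type $0$ paired with a wedge" the ordinal count must include these homotopy types. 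More likely, the paper's precise statement restricts to nonempty $\wild(X)$, i.e. non-semilocally-simply-connected $X$, or uses the bouquet convention below; I will adopt whichever convention makes the counting work.

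For well-definedness I need that the homeomorphism type of $\wild(X)$ is a homotopy invariant. A homotopy equivalence $X\to X'$ is in particular a weak homotopy equivalence in both directions, so Lemma \ref{techlemma} gives $|\wild(X)|=|\wild(X')|$. This only yields cardinality, so I need more. I will invoke the known result, Conner--Meilstrup--Repov\v{s}--Zastrow--\v{Z}eljko and Meilstrup, that homotopy equivalent one-dimensional Peano continua have homeomorphic wild sets; equivalently, a homotopy equivalence restricts up to homotopy to a homeomorphism of wild sets.

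For injectivity I also need the converse: one-dimensional Peano continua with homeomorphic wild sets are homotopy equivalent. This is false without an extra condition, since a single finite wedge of circles added to $X$ does not change $\wild(X)$. So the bijection must be set up with representatives in a reduced form. I would use Meilstrup's theorem, as in the discussion of Theorem \ref{mainthm}, to put $X$ in the normal form where $X\backslash\wild(X)$ is a disjoint union of open arcs. I would then argue that when $\wild(X)$ is countable and nonempty, the "finite graph" part can be absorbed into the wild part. Any extra circle can be slid into a wild point, since in the earring-type neighborhood there are infinitely many circles and adding one changes nothing up to homotopy. This should show that the homotopy type is determined by the homeomorphism type of $\wild(X)$ together with connectivity data, which is itself absorbable.

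For surjectivity, given a compact countable ordinal $\alpha$ embedded in $\bbr^2$, I attach to each point a shrinking earring and connect the points by a dendrite of arcs. This produces a one-dimensional Peano continuum whose wild set is exactly $\alpha$ and is countable, and Theorem \ref{mainthm} then gives the countable model.

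I expect the main obstacle to be injectivity: showing that the homotopy type is determined by the wild set alone, with the "absorb finite graph parts and the arrangement of arcs" argument. I would try to build explicit homotopy equivalences to the canonical model $\alpha$-with-earrings by collapsing a maximal tree of the arc decomposition and redistributing the remaining arcs onto the earrings.
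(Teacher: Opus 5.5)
Your well-definedness step (the homeomorphism type of $\wild(X)$ is a homotopy invariant, which the paper also imports from the literature) and your surjectivity step (a copy of the ordinal on an arc with earrings of diameters tending to $0$, then Theorem \ref{mainthm}) are essentially what the paper does. The genuine gap is your injectivity step: that two one-dimensional Peano continua with homeomorphic nonempty countable wild sets are homotopy equivalent. Sliding one extra circle into an earring handles finitely many extra arcs. When $\wild(X)$ is infinite, however, $X\backslash\wild(X)$ has infinitely many arcs. Infinitely many of them can join distinct wild points and accumulate at non-isolated points of $\wild(X)$. All the slides must then be done simultaneously, with sizes tending to $0$. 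You would also have to prove that the resulting map and a candidate inverse are continuous at those accumulation points and are homotopy inverse to each other. Your proposal does none of this ("this should show").

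Moreover, the mechanism you name cannot work. In standard form, every arc $\ov{O_n}$ that is not a simple closed curve joins two distinct wild points. So collapsing any nondegenerate tree of arcs identifies distinct wild points, and by Lemma \ref{techlemma} such a quotient map does not even induce an isomorphism on $\pi_1$. For example, two earrings joined by an arc collapse to a single earring. The quotient also would not be your canonical model, whose wild set is all of $\lambda$.

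The missing ingredient is Eda's classification, Theorem \ref{edazerodimtheorem}: one-dimensional Peano continua with zero-dimensional wild sets are homotopy equivalent if and only if their wild sets are homeomorphic. Every countable compact metric space is zero-dimensional, so this gives your injectivity immediately. The paper uses it for exactly this purpose: to see that $[\lambda]\mapsto Ho(A_\lambda)$ is well defined, and that every $X$ with countable wild set is homotopy equivalent to some $A_\lambda$. It is a substantial theorem, not a routine consequence of Meilstrup's standard form.

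Your caveat about $\wild(X)=\emptyset$ is legitimate. The paper's proof tacitly assumes $\lambda\neq\emptyset$ (it uses $\max(\lambda)$), and Theorem \ref{edazerodimtheorem} must be read with nonempty wild sets. But no convention pairing the empty ordinal with a wedge can rescue a bijection. Every finite graph (an arc, a circle, a figure eight, \dots) has empty wild set and admits a weak homotopy equivalence to a finite space, and these form infinitely many homotopy types. The statement has to be read with nonempty wild sets and nonempty ordinals.
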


\section{Wild Sets and the structure of one-dimensional Peano continua}

Generally, the term ``map" will refer to a continuous function of topological spaces. A \textit{path} in a space $X$ is a map $\alpha:\ui\to X$. We call $\alpha$ a loop if $\alpha(0)=\alpha(1)$. If $\alpha,\beta$ are paths in $X$, then $\alpha\beta$ will denote the usual concatenation and $\ov{\alpha}$ will denote the reverse of $\alpha$. Recall that a map $S^n\to X$ from the $n$-sphere is \textit{inessential} if it is null-homotopic and is \textit{essential} otherwise.

The \textit{infinite earring space} is $\bbe=\bigcup_{k\in\bbn}\mcc_k$ where $\mcc_k\subseteq \bbr^2$ is the circle of radius $1/k$ centered at $(1/k,0)$ (see Figure \ref{fig1}). The basepoint of $\bbe$ is $b_0=(0,0)$ and we let $\ell_k:[0,1]\to \mcc_k$ denote a fixed loop traversing the $k$-th circle once counterclockwise. For each $n\in\bbn$, let $X_n=\bigcup_{k=1}^{n}\mcc_k$ be the union of the first $n$ circles. Let $r_n:\bbe\to X_n$ be the canonical retraction that maps $\bigcup_{k>n}\mcc_k$ to $b_0$ and let $r_{n+1,n}:X_{n+1}\to X_n$ be the retraction that maps $\mcc_{n+1}$ to $b_0$. Note that $\pi_1(X_n,b_0)$ is isomorphic to the group $F_n$ freely generated by $\{[\ell_k]\mid 1\leq k\leq n\}$. There is a canonical map $\phi:\pi_1(\bbe,b_0)\to \varprojlim_{k}(F_n,r_{n+1,n\#})$ to the inverse limit given by $\phi([\alpha])=([r_n\circ\alpha])_{n\in\bbn}$, which is known to be injective \cite{CConedim,EK98,MM}.

\begin{definition}
A map $f:\bbe\to X$ is \textit{fully essential} if for all $j\in\bbn$, the loop $f\circ \ell_j:S^1\to X$ is essential.
\end{definition}

\begin{definition}\label{wilddef}
Let $X$ be a space. The \textit{wild set} of $X$ is the set $\wild(X)$ of all $x\in X$ at which there exists a fully essential map $f:(\bbe,b_0)\to (X,x)$.
\end{definition}

Note that for first countable spaces, Definition \ref{wilddef} is equivalent to the definition of the wild set given in the introduction. In this paper, we do not consider non-first countable spaces for which the two definitions may not be equivalent.

\begin{lemma}\cite{BMhigherwild}\label{closedlemma}
If $X$ is locally path-connected and first countable, then $\wild(X)$ is closed in $X$. In particular, if $X$ is a Peano continuum, then $\wild(X)$ is a compact metrizable space.
\end{lemma}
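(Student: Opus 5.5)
To show $\wild(X)$ is closed, I will take a point $x\in\ov{\wild(X)}$ and build a fully essential map $(\bbe,b_0)\to(X,x)$ directly. Since $X$ is first countable and locally path-connected, fix a countable neighborhood base $U_1\supseteq U_2\supseteq\cdots$ at $x$ such that any point of $U_{n+1}$ joins to $x$ by a path lying in $U_n$. The choice of each $U_{n+1}$ is made by local path-connectedness: inside $U_n$ take the path component of $x$ in a smaller basic open set.

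For each $n$, pick $x_n\in\wild(X)\cap U_{n+1}$ and a fully essential map $g_n:(\bbe,b_0)\to(X,x_n)$. Continuity of $g_n$ at $b_0$ gives some $k_n$ with $g_n(\mcc_{k})\subseteq U_{n+1}$ for all $k\ge k_n$. Set $\beta_n=g_n\circ\ell_{k_n}$, an essential loop at $x_n$ with image in $U_{n+1}$. Next choose a path $\gamma_n$ from $x$ to $x_n$ with image in $U_n$, and form the loop $\alpha_n=\gamma_n\beta_n\ov{\gamma_n}$ based at $x$. The loop $\alpha_n$ is essential, because conjugating by a path preserves null-homotopy: $[\alpha_n]$ trivial in $\pi_1(X,x)$ would force $[\beta_n]$ trivial in $\pi_1(X,x_n)$. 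Moreover $\alpha_n$ lies in $U_n$.

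Define $f:\bbe\to X$ by $f\circ\ell_n=\alpha_n$ and $f(b_0)=x$. Continuity away from $b_0$ is clear, since each circle is clopen off $b_0$. At $b_0$, any neighborhood $V$ of $x$ contains some $U_N$, and then $f(\mcc_n)\subseteq U_n\subseteq U_N$ for $n\ge N$. The finitely many remaining circles are handled by continuity of each $\alpha_n$ at its endpoints, so $f$ is continuous. Since each $\alpha_n$ is essential, $f$ is fully essential and $x\in\wild(X)$. Hence $\wild(X)$ is closed.

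For the second statement, a Peano continuum is compact metrizable and locally path-connected, so the closed subspace $\wild(X)$ is compact metrizable. The only delicate step is making sure the loops shrink to $x$, which the nested neighborhood base together with the choices $\gamma_n\subseteq U_n$ and $\beta_n\subseteq U_{n+1}$ guarantees. There is no real obstacle beyond that bookkeeping.
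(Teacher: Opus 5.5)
The paper does not prove this lemma; it imports it from \cite{BMhigherwild}, so there is no in-text argument to compare yours against.

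Your proof is correct and complete. It is the standard argument:
\begin{enumerate}
\item choose wild points $x_n$ inside a nested base of path-connected open neighborhoods of $x$;
\item extract one small essential loop from each witnessing earring map $g_n$;
\item conjugate that loop back to $x$ by a short path $\gamma_n$;
\item assemble the resulting loops into a single map $\bbe\to X$ that shrinks to $x$.
\end{enumerate}
Both hypotheses are used exactly where needed: first countability gives the nested base, and local path-connectedness gives the connecting paths $\gamma_n$. One point is worth stating explicitly. Your sets $U_{n+1}$ are open, because path components of open sets in a locally path-connected space are open. This is what lets you apply continuity of $g_n$ at $b_0$ to $U_{n+1}$ as a neighborhood of $x_n$.
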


\begin{definition}
A map $f:X\to Y$ of topological spaces is
\begin{enumerate}
\item \textit{$\pi_1$-injective} if for every $x\in X$, the induced homomorphism $f_{\#}:\pi_1(X,x)\to \pi_1(Y,f(x))$ is injective.
\item a \textit{weak homotopy equivalence} if the induced function $f_{\#}:\pi_0(X)\to \pi_0(Y)$ is bijective and if for all $x\in X$ and $n\geq 1$ the induced homomorphism $f_{\#}:\pi_n(X,x)\to \pi_n(Y,f(x))$ is an isomorphism.
\end{enumerate} 
\end{definition}

\begin{lemma}\label{wildintowildlemma}
If $f:X\to Y$ is a $\pi_1$-injective map, then $f(\wild(X))\subseteq \wild(Y)$.
\end{lemma}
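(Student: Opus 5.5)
The plan is to work straight from Definition \ref{wilddef}: a fully essential map into $X$ at a wild point, composed with $f$, should still be fully essential, now into $Y$. No topological hypotheses on $X$ or $Y$ are needed beyond $\pi_1$-injectivity, because the definition of the wild set is phrased entirely in terms of essential loops.

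Let $x\in\wild(X)$. By Definition \ref{wilddef} there is a fully essential map $g:(\bbe,b_0)\to(X,x)$. We take the composite $h=f\circ g:(\bbe,b_0)\to(Y,f(x))$, which is continuous and based at $f(x)$. It remains to check that $h$ is fully essential.

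Fix $j\in\bbn$. The loop $g\circ\ell_j$ is based at $x$ and is essential in $X$. An essential map $S^1\to X$ is exactly a loop whose class in $\pi_1(X,x)$ is nontrivial. This is the standard fact that a based loop is null-homotopic as a free map $S^1\to X$ if and only if it is null-homotopic rel basepoint, which follows from extending over the disk. Hence $[g\circ\ell_j]\neq 1$ in $\pi_1(X,x)$. Since $f_\#:\pi_1(X,x)\to\pi_1(Y,f(x))$ is injective, $f_\#[g\circ\ell_j]=[h\circ\ell_j]\neq 1$. Therefore $h\circ\ell_j$ is essential in $Y$.

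As $j$ was arbitrary, $h$ is fully essential, so $f(x)\in\wild(Y)$. The only point needing care is the free-versus-based null-homotopy equivalence, and it is routine.
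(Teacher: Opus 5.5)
Your proposal is correct and follows the same route as the paper: compose a fully essential map $g:(\bbe,b_0)\to(X,x)$ with $f$ and use $\pi_1$-injectivity to see that each loop $f\circ g\circ\ell_j$ remains essential. Your remark that a based loop is freely null-homotopic if and only if it is trivial in $\pi_1$ is the detail the paper leaves implicit, and you handle it correctly.
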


\begin{proof}
If $x\in \wild(X)$, there is a fully essential map $g:(\bbe,b_0)\to (X,x)$. Since $f$ is $\pi_1$-injective, $f\circ g:(\bbe,b_0)\to (Y,f(x))$ is fully essential. Thus $f(x)\in \wild(Y)$.
\end{proof}

\begin{definition}
A path $\beta:\ui\to X$ is said to be \textit{reduced} if either $\beta$ is constant or if there does not exist an interval $[a,b]\subseteq \ui$ such that $\beta(a)=\beta(b)$ and such that $\beta|_{[a,b]}:[a,b]\to X$ is null-homotopic rel. $\{a,b\}$. If $\beta$ is a reduced path that is path-homotopic to another path $\alpha$, then we say that $\beta$ is a \textit{reduced representative of} $\alpha$.
\end{definition}

In short, a reduced path is one which has no null-homotopic subloops. 

\begin{remark}\label{reducedrepremark}
Let $X$ be a one-dimensional metrizable space. We recall from \cite[Theorem 3.9]{CConedim} that every loop $\alpha:\ui\to X$ is path-homotopic to a reduced loop $\beta:\ui\to X$ by a homotopy $H:\ui^2\to X$ satisfying $\im(H)\subseteq \im(\alpha)$ \cite{CConedim}. Moreover, reduced representatives are unique up to reparameterization in the following sense: if $\beta,\beta':\ui\to X$ are reduced loops that are path-homotopic to each other, then there exists an increasing homeomorphism $h:\ui\to \ui$ such that $\beta=\beta'\circ h$.
\end{remark}

\begin{lemma}\cite[Corollary 3.3]{CConedim}\label{inclusionlemma}
If $X$ is a one-dimensional metrizable space and $A\subseteq X$, then the inclusion map $A\to X$ is $\pi_1$-injective.
\end{lemma}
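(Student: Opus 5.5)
The plan is to reduce the statement to uniqueness of reduced representatives, which Remark \ref{reducedrepremark} supplies. Let $\alpha:\ui\to A$ be a loop based at $a\in A$, and suppose $\alpha$ is null-homotopic in $X$. We must show that $\alpha$ is null-homotopic in $A$.

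First apply Remark \ref{reducedrepremark} in the one-dimensional metrizable space $X$. This gives a reduced loop $\beta$ and a path-homotopy $H:\ui^2\to X$ from $\alpha$ to $\beta$ with $\im(H)\subseteq \im(\alpha)\subseteq A$. In particular $H$ is a path-homotopy in $A$, so $\alpha\simeq\beta$ rel. endpoints already inside $A$.

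Next, since $\alpha$ is null-homotopic in $X$, the reduced loop $\beta$ is path-homotopic in $X$ to the constant loop $c_a$. The constant loop is reduced by definition. By the uniqueness clause of Remark \ref{reducedrepremark}, there is an increasing homeomorphism $h$ with $\beta=c_a\circ h=c_a$. Hence $\beta$ is constant, and concatenating gives $\alpha\simeq c_a$ in $A$. Therefore $\ker(\pi_1(A,a)\to\pi_1(X,a))$ is trivial for every $a\in A$, which is exactly $\pi_1$-injectivity.

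The main point needing care is the applicability of the cited results. The existence half requires $\alpha$ to be a loop in a one-dimensional metrizable space; $\alpha$ is a loop in $X$, so this is satisfied. The uniqueness half must be applied in $X$, where the homotopy $\beta\simeq c_a$ lives, not in $A$; this is also fine. Thus the whole argument happens in $X$, and the crucial ingredient is the image control $\im(H)\subseteq\im(\alpha)$, which transports the reduction homotopy into $A$. One could alternatively observe that $A$ is itself one-dimensional, since subspaces of one-dimensional separable metric spaces have dimension $\le 1$, but this is not needed.
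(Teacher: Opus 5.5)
Your proof is correct and follows the paper's argument: you reduce $\alpha$ by a homotopy with image in $\im(\alpha)\subseteq A$ using Remark \ref{reducedrepremark}, note that the reduced representative of a loop null-homotopic in $X$ is constant, and so that homotopy is already a null-homotopy in $A$. The uniqueness clause is not actually needed for the middle step: a non-constant reduced loop that is null-homotopic rel.\ endpoints would violate the definition of reduced with $[a,b]=\ui$.
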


\begin{proof}
Let $\alpha:\ui\to A$ be a loop based at $x_0$, which is null-homotopic in $X$. By Remark \ref{reducedrepremark}, $\alpha$ is homotopic in $X$ to it's unique reduced representative, which is the constant loop at $x_0$. Moreover, this homotopy $H:\ui^2\to X$ may by chosen so that $\im(H)\subseteq \im(\alpha)$. Since $\im(H)\subseteq A$, $\alpha$ is null-homotopic in $A$. 
\end{proof}

\section{Homology of one-dimensional spaces}

In order to prove the homological analogue of Lemma \ref{inclusionlemma}, we recall Eda's notion of ``$0$-forms" \cite{Edasingularonedim16}. We let $\ov{\alpha}$ denote the reverse path of $\alpha$.

\begin{definition}\label{zeroformdef}
A sequence $\beta_1,\beta_2,\dots,\beta_{2n}$ of non-constant reduced paths in a space $X$ is a \textit{$0$-form} if the concatenation $\beta_1\beta_2\cdots\beta_{2n}$ is a well-defined loop and if there exists a partition of $\{1,2,\dots,2n\}$ into pairs $\{i_1,j_1\},\{i_2,j_2\},\dots,\{i_n,j_n\}$ such that $\beta_{i_k}=\ov{\beta_{j_k}}$ for all $k\in\{1,2,\dots,n\}$.
\end{definition}

If $\alpha:\ui\to X$ is a loop in $X$, we may regard it as a $1$-cycle and let $[\alpha]_{h}$ denote the homology class of $\alpha$ in $H_1(X)$. The following lemma, due to Katsuya Eda, characterizes the structure of reduced representatives of loops with trivial homology classes.

\begin{lemma}[Zero-Form Lemma]\cite[Lemma 3.6]{Edasingularonedim16}\label{zeroformlemma}
Suppose $X$ is a one-dimensional metrizable space and $\alpha:\ui\to X$ is an essential loop based at a point $x_0\in X$. Then $[\alpha]_h=0$ if and only if there exists a $0$-form $\beta_1,\beta_2,\dots,\beta_{2n}$ such that the concatenation $\beta_1\beta_2\cdots\beta_{2n}$ is a reduced representative of $\alpha$.
\end{lemma}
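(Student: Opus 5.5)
We prove the two implications separately. The reverse implication is pure singular homology. The forward implication combines the Hurewicz theorem with the structure of reduced paths from Remark \ref{reducedrepremark}.

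\emph{($\Leftarrow$)} Suppose $\beta_1\beta_2\cdots\beta_{2n}$ is a $0$-form that is a reduced representative of $\alpha$. Path-homotopic loops are homologous, so $[\alpha]_h=[\beta_1\cdots\beta_{2n}]_h$. We then use two standard facts about singular $1$-chains: a concatenation $\gamma\delta$ is homologous to $\gamma+\delta$ modulo a boundary, and $\gamma+\ov{\gamma}$ is a boundary. Hence the cycle $\beta_1\cdots\beta_{2n}$ is homologous to $\sum_i\beta_i$. Grouping this sum according to the pairing $\{i_k,j_k\}$ gives $\sum_k(\beta_{i_k}+\ov{\beta_{i_k}})$, which is a boundary. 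Therefore $[\alpha]_h=0$.

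\emph{($\Rightarrow$)} If $[\alpha]_h=0$, the Hurewicz theorem places $[\alpha]$ in the commutator subgroup of $\pi_1(X,x_0)$. So $\alpha\simeq \prod_{m=1}^{r}a_mb_m\ov{a_m}\,\ov{b_m}$ for loops $a_m,b_m$ based at $x_0$. Replace each $a_m,b_m$ by its reduced representative (Remark \ref{reducedrepremark}) and discard any that are constant. This gives a finite sequence of non-constant reduced paths whose concatenation is a loop homotopic to $\alpha$. The sequence admits a pairing into mutually reverse paths, so it satisfies every requirement of a $0$-form except that the concatenation need not be reduced. It remains to reduce the concatenation while preserving this structure. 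Call such a sequence a \emph{pre-$0$-form}.

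The key tool is a cancellation lemma. If $\gamma,\delta$ are reduced paths with $\gamma(1)=\delta(0)$, then $\gamma=\gamma'\varepsilon$ and $\delta=\ov{\varepsilon}\delta'$ for a (possibly constant) path $\varepsilon$, and $\gamma'\delta'$ is reduced, hence the reduced representative of $\gamma\delta$. To see this, take $\varepsilon$ to be the longest terminal segment of $\gamma$ whose reverse is an initial segment of $\delta$. If $\gamma'\delta'$ contained a null-homotopic subloop, a one-dimensional argument would show that it straddles the junction. Using uniqueness of reduced representatives up to reparameterization, that subloop would then extend the common segment, contradicting maximality. I expect this step to be the main obstacle. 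It needs care with parameterizations, since a reduced path may revisit points, and it is where one-dimensionality enters through \cite{CConedim}.

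Given the lemma, reduce the pre-$0$-form from left to right in stack fashion. Whenever adjacent pieces $\beta_i=\beta'\varepsilon$ and $\beta_{i+1}=\ov{\varepsilon}\gamma'$ cancel, subdivide both pieces and also their partners. The partner of $\beta_i$ becomes $\ov{\varepsilon}\,\ov{\beta'}$, and the partner of $\beta_{i+1}$ becomes $\ov{\gamma'}\varepsilon$. Now delete the adjacent $\varepsilon,\ov{\varepsilon}$. The leftover pieces $\ov{\varepsilon}$ and $\varepsilon$ in the partners are mutually reverse, so they can be paired with each other, and the pairing persists. Subpaths of reduced paths are reduced, and constant pieces can be dropped together with their partners. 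Each cancellation is between a new piece and the current reduced prefix, so finitely many steps suffice and the number of pieces stays finite.

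The outcome is a sequence that is still a pre-$0$-form and whose concatenation is reduced and homotopic to $\alpha$. Because $\alpha$ is essential, this sequence is nonempty, so it is the required $0$-form.
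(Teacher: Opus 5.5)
The paper gives no proof of this lemma; it quotes it from Eda \cite{Edasingularonedim16}, so your argument has to stand on its own. Your $(\Leftarrow)$ direction is correct. For $(\Rightarrow)$, the strategy of writing $\alpha$ as a product of commutators and then reducing while preserving a reverse-pairing is the natural one. The cancellation lemma you single out as the main obstacle is a known fact in this literature, and your sketch of it is essentially right. Since $\gamma'$ and $\delta'$ are reduced, any null-homotopic subloop of $\gamma'\delta'$ automatically straddles the junction, and uniqueness of reduced paths finishes it. A maximal $\varepsilon$ exists because a non-constant reduced path has no nontrivial self-reparametrization, so the nested matchings are compatible and pass to the limit.

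The real gap is your final claim that the stack procedure stops after finitely many steps with finitely many pieces. The reason you give does not control the bookkeeping. Every cancellation splits the partners of the two pieces involved, and these partners may sit in the unprocessed part or in the stack, even inside the segment being cancelled. So neither the queue nor the stack decreases. For instance, take $a=xy$ and $b=\ov{y}x$ in a wedge of two circles: processing $b$ against $a$ turns the queue $\ov{a},\ov{b}$ into the four pieces $\ov{y},\ov{x},\ov{x},y$.

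In a graph this is harmless, since splits happen only at vertices and the number of edge traversals bounds everything. In a wild one-dimensional space there is no such alphabet. Breakpoints propagate along alternating chains of partner maps $t\mapsto 1-t$ and cancellation matchings, which is an interval-exchange-type system. Nothing you say rules out infinite orbits, and these would give infinitely many pieces, whereas a $0$-form must be finite.

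You need a rigidity argument for reduced paths. Here is one way to supply it.
\begin{enumerate}
\item Reduce first at the level of the original $2n$ pieces. This takes finitely many applications of the cancellation lemma and yields finitely many orientation-reversing matchings between parameter subintervals.
\item Pair the surviving parameters by following the alternating chains.
\item Only finitely many surviving points have chains that meet the finitely many endpoints of these matchings, because each endpoint lies on one chain and a chain has at most two surviving ends.
\item Every other surviving point has an open interval of parameters sharing its itinerary. If its chain were infinite, you would get infinitely many pairwise disjoint parameter intervals, each carrying a reparametrization of one fixed non-constant path or its reverse. This contradicts uniform continuity.
\item Hence chains are finite and itineraries are locally constant off a finite set, so the survivors split into finitely many pieces matched with their reverses.
\item A piece cannot be matched with an overlapping copy of its own reverse, since that would produce a subpath $\tau\ov{\tau}$ in the reduced loop.
\end{enumerate}
Without an argument of this kind, the forward implication is not established.
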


\begin{lemma}\label{hinclusioncor}
If $X$ is a one-dimensional metrizable space and $A\subseteq X$, then the inclusion map $A\to X$ induces an injection $H_1(A)\to H_1(X)$ on first homology groups.
\end{lemma}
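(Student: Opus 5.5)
We want to show that if $c\in H_1(A)$ maps to $0$ in $H_1(X)$, then $c=0$ already in $H_1(A)$. The plan is to reduce to a single loop and then use the Zero-Form Lemma (Lemma \ref{zeroformlemma}).

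\textbf{Reduction to a loop.} First we reduce to loops. If $A$ is path connected, Hurewicz gives a surjection $\pi_1(A,a_0)\to H_1(A)$, so every class is $[\alpha]_h$ for some loop $\alpha$ in $A$. In general $H_1(A)=\bigoplus H_1(A_i)$ over the path components $A_i$ of $A$. Each $A_i$ lies in a single path component $X_{j(i)}$ of $X$, and $H_1(X)=\bigoplus_j H_1(X_j)$. A class $c=\sum c_i$ maps to zero if and only if, for each $j$, the sum of the images of those $c_i$ with $j(i)=j$ vanishes. This is the only bookkeeping issue, since several components of $A$ may lie in one component of $X$. We handle it with the standard trick: a finite sum of loops $\alpha_1,\dots,\alpha_m$ in different components $A_i\subseteq X_j$ can be joined inside $X_j$ by paths $\gamma_k$ from a fixed basepoint. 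The resulting loop $\prod \gamma_k\alpha_k\ov{\gamma_k}$ represents the same homology class in $H_1(X)$. We then apply the argument below to $A'=A\cup\bigcup\im(\gamma_k)$, and finally use that $H_1(A)\to H_1(A')$ is itself injective on the summands $\bigoplus H_1(A_i)$. Proving that last injectivity cleanly may itself need the argument below, so it would be simplest to first prove the path-connected-$A$ case and then deal with this separately.

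\textbf{Main argument.} Let $\alpha$ be a loop in $A$ based at $x_0$ with $[\alpha]_h=0$ in $H_1(X)$. If $\alpha$ is inessential in $X$, then by Lemma \ref{inclusionlemma} it is inessential in $A$, hence $[\alpha]_h=0$ in $H_1(A)$. Otherwise we argue as follows.
\begin{enumerate}
\item By Remark \ref{reducedrepremark}, $\alpha$ is path-homotopic in $X$ to a reduced loop $\beta$ via a homotopy whose image lies in $\im(\alpha)\subseteq A$. So $\beta$ lies in $A$ and is path-homotopic to $\alpha$ within $A$.
\item Since $A$ is itself one-dimensional metrizable, $\beta$ is also a reduced loop of $A$. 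Any null-homotopic subloop in $A$ would be null-homotopic in $X$, so being reduced in $X$ forces being reduced in $A$; the converse uses Lemma \ref{inclusionlemma}, though it is not needed.
\item By the Zero-Form Lemma applied in $X$, some reduced representative of $\alpha$ in $X$ has the form $\beta_1\cdots\beta_{2n}$ for a $0$-form. By the uniqueness in Remark \ref{reducedrepremark}, this representative is a reparametrization of $\beta$, so each $\beta_k$ has image in $\im(\beta)\subseteq A$.
\item The $\beta_k$ are reduced paths in $A$, by the same subloop argument as in step 2, so $\beta_1,\dots,\beta_{2n}$ is a $0$-form in $A$. The loop $\alpha$ is essential in $A$ because it is essential in $X$.
\item The converse direction of the Zero-Form Lemma, applied in $A$, gives $[\alpha]_h=[\beta]_h=0$ in $H_1(A)$. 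Alternatively, this can be seen directly: the chain $\beta_1+\cdots+\beta_{2n}$ is homologous to $\beta$, and the paired terms $\beta_{i_k}+\ov{\beta_{j_k}}$ cancel in homology.
\end{enumerate}

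\textbf{Main obstacle.} The main obstacle is the bookkeeping in step 3: checking that the $0$-form decomposition supplied in $X$ really lives in $A$. This rests entirely on the uniqueness of reduced representatives together with the image-control statement in Remark \ref{reducedrepremark}. The reduction to a single loop when $A$ is not path connected is a secondary technical point.
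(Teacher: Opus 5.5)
Your main argument is correct and is the paper's proof. The Zero-Form Lemma in $X$ gives a $0$-form reduced representative. The image control and uniqueness in Remark \ref{reducedrepremark} put it, and the homotopy from $\alpha$ to it, inside $\im(\alpha)\subseteq A$. The paired segments then cancel in $H_1(A)$; note that the cancelling chains are $\beta_{i_k}+\beta_{j_k}$ with $\beta_{j_k}=\ov{\beta_{i_k}}$, not $\beta_{i_k}+\ov{\beta_{j_k}}$. Your Hurewicz remark fills a step the paper leaves implicit. The paper's proof likewise treats only a class represented by one loop, i.e.\ effectively path-connected $A$, which is all that is used later (for the circles $C_n$ in Lemma \ref{choosehomlemma}).

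For non-path-connected $A$, however, your sketched reduction is circular. Injectivity of $\bigoplus_i H_1(A_i)\to H_1(A')$ is exactly the original claim for the non-path-connected subset $A$ of the one-dimensional space $A'$. The missing idea is disjointness of images.

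\begin{itemize}
\item Setup: take loops $\alpha_1,\dots,\alpha_m$ in distinct path components $A_1,\dots,A_m$ of $A$, all in one path component of $X$ (work there), with $\sum_k[\alpha_k]_h=0$. Replace each by its reduced representative, which stays in $A_k$, so the images are pairwise disjoint.
\item Case $m=2$: Lemma \ref{disjointimagehomologylemma} applied to $\alpha_1$ and $\ov{\alpha_2}$ gives $[\alpha_1]_h=[\alpha_2]_h=0$ in $H_1(X)$.
\item Case $m>2$: take a path from $\im(\alpha_1)$ to $\bigcup_{k\geq 2}\im(\alpha_k)$. Cut it at its first hit of that closed set and at its last earlier visit to $\im(\alpha_1)$. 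Its interior then misses every image, and it joins $\im(\alpha_1)$ to some $\im(\alpha_k)$, say $k=2$.
\item Induction: merge $\alpha_1$ and $\alpha_2$ along this path and reduce. This leaves $m-1$ reduced loops with disjoint images and the same total class $0$. By induction each is null-homologous, in particular $[\alpha_1]_h+[\alpha_2]_h=0$. The case $m=2$ then gives $[\alpha_1]_h=[\alpha_2]_h=0$.
\item Conclusion: your path-connected argument applied to each $A_k\subseteq X$ upgrades $[\alpha_k]_h=0$ in $H_1(X)$ to $[\alpha_k]_h=0$ in $H_1(A_k)$.
\end{itemize}
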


\begin{proof}
Choose $x_0\in A$ and let $i:A\to X$ denote the inclusion map. Let $\alpha:\ui\to A$ be an essential loop based at $x_0$ such that $[i\circ\alpha]_{h}=0$ in $H_1(X)$. By Lemma \ref{zeroformlemma} there exists a $0$-form $\beta_1,\beta_2,\dots,\beta_{2n}$ such that the concatenation $\beta_1\beta_2\cdots\beta_{2n}$ is a reduced representative of $i\circ\alpha$. Fix a partition of $\{1,2,\dots,2n\}$ into pairs $\{i_1,j_1\},\{i_2,j_2\},\dots,\{i_n,j_n\}$ such that $\beta_{i_k}=\ov{\beta_{j_k}}$ for all $k\in\{1,2,\dots,n\}$. Since $\beta_1\beta_2\cdots\beta_{2n}$ is a reduced representative of $\alpha$ and $\im(\alpha)\subseteq A$, by Remark \ref{reducedrepremark}, there exists a homotopy $H:\ui^2\to X$ from $\alpha$ to $\beta_1\beta_2\cdots\beta_{2n}$ with $\im(H)\subseteq \im(\alpha)$. Since $H$ has image in $A$, we have $[\alpha]=[ \beta_1\beta_2\cdots\beta_{2n}]$ in $\pi_1(A,x_0)$. Thus $[\alpha]_{h}=[\beta_1\beta_2\cdots\beta_{2n}]_{h}=\sum_{k=1}^{n}[\beta_{i_k}\ov{\beta_{j_k}}]_{h}=0$ in $H_1(A)$.
\end{proof}

\begin{lemma}\label{disjointimagehomologylemma}
Suppose $\alpha,\beta:\ui\to X$ are reduced loops in one-dimensional path-connected metrizable space $X$ with disjoint images. If at least one of $\alpha$ or $\beta$ is not null-homologous, then $\alpha$ and $\beta$ are not homologous.
\end{lemma}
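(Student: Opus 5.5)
Fix $x_0=\alpha(0)$ and $y_0=\beta(0)$, and choose a path $\gamma$ from $x_0$ to $y_0$. Suppose, for contradiction, that $\alpha$ and $\beta$ are homologous, so $[\alpha]_h=[\beta]_h$ in $H_1(X)$. The plan is to turn this into a null-homologous loop, apply the Zero-Form Lemma \ref{zeroformlemma}, and then show that the disjointness of the images prevents any cancellation between the $\alpha$-part and the $\beta$-part of the reduced representative.

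\textbf{Step 1: form a null-homologous loop.} Let
\[
\lambda=\alpha\gamma\ov{\beta}\,\ov{\gamma}.
\]
This is a loop at $x_0$ with $[\lambda]_h=[\alpha]_h-[\beta]_h=0$. We may assume $\lambda$ is essential: if it were null-homotopic, its Hurewicz image would agree, and we would simply pass to the next step with a trivial reduced form. By Lemma \ref{zeroformlemma}, the reduced representative $\lambda'$ of $\lambda$ is a concatenation $\beta_1\cdots\beta_{2n}$ forming a $0$-form.

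\textbf{Step 2: compute the reduced representative.} Reduce $\gamma$ relative to its endpoints to a reduced path $\gamma'$, using Remark \ref{reducedrepremark} (which also applies to paths, by the standard argument). Then compare $\lambda'$ with $\alpha\gamma'\ov{\beta}\,\ov{\gamma'}$.

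Since $\alpha$ and $\beta$ are reduced, cancellation can only occur at the three junctions. Cancellation at the junction of $\alpha$ with $\gamma'$ deletes a terminal segment of $\alpha$ against an initial segment of $\gamma'$. Similar cancellations occur at the junctions of $\gamma'$ with $\ov\beta$, and at the cyclic junction of $\ov{\gamma'}$ with $\alpha$.

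The key observation is that $\alpha$ and $\beta$ have disjoint images, so no segment of $\alpha$ can ever cancel directly against a segment of $\beta$. Consequently $\gamma'$ can never be entirely consumed in a way that lets $\alpha$ touch $\beta$. The cyclically reduced form therefore has the shape
\[
\lambda'\simeq \alpha_0\,\delta\,\ov{\beta_0}\,\ov{\delta}
\]
up to conjugation. Here $\alpha_0$ and $\beta_0$ are the cyclic reductions of $\alpha$ and $\beta$, and $\delta$ is a nonconstant reduced connecting path, which is nonconstant because the images are disjoint.

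\textbf{Step 3: derive a contradiction.} Assume $\alpha$ is not null-homologous. Then $\alpha_0$ is itself not null-homologous, since conjugation does not change the homology class. Use the pairing of the $0$-form. Every subpath lying in $\im(\alpha_0)$ must be paired with a reversed copy of itself. That copy must again lie in $\im(\alpha_0)$, because it cannot lie in $\im(\beta_0)$, which is disjoint from $\im(\alpha_0)$.

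One must also rule out a piece of $\delta$ matching a piece of $\alpha_0$. Such overlaps can be absorbed by refining the subdivision so that each $\beta_i$ lies entirely in one of the four blocks. Then pieces of $\delta$ pair with pieces of $\delta$ or of $\alpha_0$, and so on.

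The cleanest way to finish is to retract: restrict the pairing to the pieces inside the $\alpha_0$-block. I expect to have to argue that this restriction still gives a $0$-form presentation of $\alpha_0$, up to $\delta$-pieces which cancel among themselves. That would yield $[\alpha_0]_h=0$, a contradiction.

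\textbf{Main obstacle.} The hard step is making the block-restriction argument in Step 3 rigorous. The pairing of a $0$-form can interleave pieces of $\alpha_0$ and pieces of $\delta$ that share points, so one must show that pieces with image in $\im(\alpha)\cup\im(\delta)$ can be separated.

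If that fails, the fallback is Lemma \ref{hinclusioncor} applied to $A=X\setminus\im(\beta)$; if this set cuts $\delta$ badly, use instead $A=\im(\alpha)\cup\im(\gamma)$ with $\gamma$ chosen to meet $\im(\beta)$ only at $y_0$. The difficulty is that $\beta$ itself does not live in such an $A$. So one would instead use a Mayer–Vietoris-style decomposition via the $0$-form pieces, which again reduces to the same separation issue.
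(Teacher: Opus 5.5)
\textbf{There is a genuine gap, at exactly the point you flag in Step 3.} Your overall strategy (form $\alpha\gamma\ov{\beta}\,\ov{\gamma}$, apply the Zero-Form Lemma, restrict the pairing to the $\alpha$-block) is the paper's strategy. But you allow the connecting path $\gamma$ to be arbitrary. Then the reduced connecting piece $\delta$ can run along $\im(\alpha)$ (and $\im(\beta)$) on a large set. So a piece of the $0$-form inside the $\alpha_0$-block may be paired with a piece of $\delta$ or $\ov{\delta}$, and the pairing no longer restricts to the $\alpha_0$-block.

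Nothing in Steps 2--3 rules this out. The Zero-Form Lemma gives you an arbitrary pairing, not a convenient one, so the hope that the stray $\delta$-pieces ``cancel among themselves'' is unproved. Your Step 2 shape analysis is also only heuristic for the same reason. The fallback via Lemma \ref{hinclusioncor} fails too, as you note: the homology relation between $\alpha$ and $\beta$ lives in $X$, not in any subspace avoiding $\im(\beta)$.

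\textbf{The missing idea is to choose the connecting path so that separation is automatic.} Since $X$ is path-connected (hence arc-connected) and $\im(\alpha),\im(\beta)$ are disjoint compact sets, take an arc $\sigma$ from $\im(\alpha)$ to $\im(\beta)$. Let $t_1=\min\sigma^{-1}(\im(\beta))$ and $t_0=\max\left(\sigma^{-1}(\im(\alpha))\cap[0,t_1]\right)$. Restricting $\sigma$ to $[t_0,t_1]$ gives an injective $\gamma$ from $x_0\in\im(\alpha)$ to $x_1\in\im(\beta)$ with $\im(\gamma)\cap(\im(\alpha)\cup\im(\beta))=\{x_0,x_1\}$.

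Rebase using paths $\eta\subseteq\im(\alpha)$ from $x_0$ to $\alpha(0)$ and $\zeta\subseteq\im(\beta)$ from $x_1$ to $\beta(0)$, and set $L=\eta\alpha\ov{\eta}\,\gamma\,\zeta\ov{\beta}\,\ov{\zeta}\,\ov{\gamma}$. Then:
\begin{enumerate}
\item The reduced representative of $L$ is $\alpha_1\gamma\beta_1\ov{\gamma}$, with $\im(\alpha_1)\subseteq\im(\alpha)$.
\item The complementary block $\gamma\beta_1\ov{\gamma}$ meets $\im(\alpha_1)$ only in $\{x_0\}$.
\item Subdivide the $0$-form so that $\alpha_1=\delta_1\cdots\delta_m$ exactly, splitting partners correspondingly.
\item Any $\delta_i$ with $i\leq m$ paired with some $\delta_j$, $j>m$, would have image in $\{x_0\}$. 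It would then be constant, contradicting the definition of a $0$-form.
\end{enumerate}
Hence the pairing restricts to $\{1,\dots,m\}$, so $\alpha_1$ is null-homologous, and so is $\alpha$, since it is conjugate to $\alpha_1$. This is the required contradiction. It also makes your cancellation analysis in Step 2, and the case split on whether $\lambda$ is null-homotopic, unnecessary.
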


\begin{proof}
Suppose without loss of generality that $\alpha$ is not null-homologous and suppose, to obtain a contradiction, that $\alpha$ and $\beta$ are homologous. Since $X$ is path-connected and the images of $\alpha$ and $\beta$ are closed, we may find an injective path $\gamma:\ui\to X$ with $\gamma(0)=x_0\in \im(\alpha)$, $\gamma(1)=x_1\in\im(\beta)$ and $\im(\gamma)\cap(\im(\alpha)\cup\im(\beta))=\{x_0,x_1\}$. Let $\eta:\ui\to \im(\alpha)$ be an injective path from $x_0$ to $\alpha(0)=\alpha(1)$ and let $\zeta:\ui\to \im(\beta)$ be an injective path from $x_1$ to $\beta(0)=\beta(1)$. Define loop $L:\ui\to X$ based at $x_0$ defined by \[L=\eta\alpha\ov{\eta}\gamma\zeta\ov{\beta}\ov{\zeta}\ov{\gamma}.\]
Since $\alpha$ and $\beta$ are assumed to be homologous, it follows that $L$ is null-homologous. Additionally, since the paths $\alpha,\beta,\gamma,\eta,\zeta$ are all reduced, the reduced representative of $L$ has the form $L'=\alpha_1\gamma\beta_1\ov{\gamma}$ where $\alpha_1$ is the reduced representative of $\eta\alpha\ov{\eta}$ and $\beta_1$ is the reduced representative of $\zeta\beta\ov{\zeta}$. Note that $\im(\alpha_1)\subseteq \im(\alpha)$ and $\im(\beta_1)\subseteq \im(\beta)$.

By Lemma \ref{zeroformlemma}, there exists a $0$-form $\delta_1,\delta_2,\dots,\delta_{2n}$ and a partition of $\{1,2,\dots,2n\}$ into pairs $\{i_1,j_1\},\{i_2,j_2\},\dots,\{i_n,j_n\}$ such that $L'\equiv\delta_1\delta_2\cdots\delta_{2n}$ and $\delta_{i_k}=\ov{\delta_{j_k}}$ for all $k\in\{1,2,\dots,n\}$. By subdividing the product $\delta_1\delta_2\cdots\delta_{2n}$ further and adjusting the inverse pairing accordingly, we may assume that $\alpha_1\equiv\delta_1\delta_2\cdots\delta_{m}$ for some $m$. Then the image of $\delta_{m+1}\delta_{m+2}\cdots \delta_{2n}$ meets $\im(\alpha_1)$ only at the point $x_0$. Hence, whenever $i_k\in \{1,2,\dots,m\}$, both $\im(\delta_{i_k})$ and $\im(\delta_{j_k})$ are subsets of $\im(\alpha_1)$. Therefore, $\delta_1,\delta_2,\dots,\delta_{m}$ is a $0$-form and thus $\alpha_1$ is null-homologous by Lemma \ref{zeroformlemma}. Since $\alpha\simeq\alpha_1$, we see that $\alpha$ is null-homologous, which is a contradiction.
\end{proof}

\section{Standard forms for One-Dimensional Peano continua}

Next, we recall an important structural theorem due to Meilstrup. Recall that a \textit{simple closed curve} is a space homeomorphic to $S^1$.

\begin{definition}
A one-dimensional Peano continuum $X$ is in \textit{standard form} if $X\backslash \wild(X)$ is a (possibly finite) disjoint union of open arcs $O_1,O_2,O_3,\dots$ such that $\ov{O_n}$ is an arc or simple closed curve with $\ov{O_n}\backslash O_n\subseteq \wild(X)$.
\end{definition}

If $X$ is in standard form, $\wild(X)\neq \emptyset$, and $X\backslash \wild(X)$ has finitely many connected components, then it is straightforward to see that $\wild(X)$ is a finite disjoint union of Peano continua $P_1,P_2,\dots,P_m$ each of which satisfies $\wild(P_j)=P_j$. Hence, if $X$ is in standard form and $\wild(X)$ is infinite, then $X\backslash\wild(X)$ must have infinitely many connected components.

\begin{theorem}\cite{Meilstrup}\label{standardformtheorem}
Every one-dimensional Peano continuum is homotopy equivalent to a one-dimensional Peano continuum that is in standard form.
\end{theorem}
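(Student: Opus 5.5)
The plan follows Meilstrup's approach. Given a one-dimensional Peano continuum $X$, we first replace $X$ by a homotopy equivalent Peano continuum with no ``dangling'' parts. Then we collapse the non-wild part to the standard-form shape.

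\textbf{Step 1: remove contractible hair.} Let $X'$ be the union of the images of all essential reduced loops in $X$, together with a single point if there are none. Equivalently, $X'$ is the smallest subcontinuum carrying all of $\pi_1$. We show $X'$ is a Peano continuum and that $X$ deformation retracts onto it. The complement $X\backslash X'$ is a union of dendrite-like pieces, each attached to $X'$ at one point; a reduced loop through such a piece would have to backtrack, which is impossible. Each of these trees contracts to its attachment point. Local connectedness of $X$ forces their diameters to tend to $0$, so the individual contractions assemble continuously into a single deformation retraction.

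\textbf{Step 2: describe $X'\backslash\wild(X')$.} By Lemma \ref{inclusionlemma} the inclusion $X'\to X$ is $\pi_1$-injective, and in fact a homotopy equivalence, so $\wild(X')=\wild(X)\cap X'$. By Lemma \ref{closedlemma}, $U=X'\backslash\wild(X')$ is open. Every point of $U$ has a neighborhood in which all small loops are null-homotopic. A one-dimensional Peano continuum with this local property is locally a graph, so $U$ is a locally finite graph, possibly with infinitely many edges and vertices accumulating on $\wild(X')$. Since $X'$ has no hair, each component of $U$ is a locally finite graph with no free ends except those running into $\wild(X')$.

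\textbf{Step 3: straighten each component.} Working one component $G$ of $U$ at a time, choose a spanning forest $T$ of $G$ in which every tree component has an end limiting on $\wild(X')$. Contract each tree of $T$ within $\ov{G}$, either to a point of $\ov{G}\backslash G$ or by sliding it out along a ray to the wild set. What remains of $G$ is a disjoint union of open edges whose closures are arcs or simple closed curves with endpoints in $\wild(X)$. The result is a quotient $X''$ that is still one-dimensional and Peano. To see that it is homotopy equivalent to $X'$, use that each collapsed piece is contractible, and that only finitely many pieces have diameter greater than any given $\epsilon$. This second fact comes from compactness and local connectedness, since the diameters of the components of $U$ tend to $0$. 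From it, the collapsing homotopies converge uniformly, so the quotient map and a homotopy inverse, built piece by piece, are continuous.

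\textbf{Main obstacle.} The hard step is Step 3. The difficulty is choosing the tree collapses coherently when infinitely many graph pieces accumulate on the wild set, so that the quotient map stays a homotopy equivalence and not merely a map. The most delicate part is controlling continuity at points of $\wild(X)$. I would first try to dominate the diameters of all collapse tracks by the diameters of the corresponding components of $U$. Any single component $G$ with a non-compact closure-structure would then be handled by inductive exhaustion of $G$ by finite subgraphs, collapsing finitely many trees at each stage, with the diameters of the stage-$k$ modifications tending to $0$.
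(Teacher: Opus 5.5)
The paper gives no proof of this statement; it quotes it from Meilstrup. Your architecture is the natural one: pass to the core, observe that the non-wild part is a locally finite graph, then collapse a spanning forest into the wild set. Steps 1--2 are essentially the known deforestation results. But Step 3 carries the whole content of the theorem, and it has a genuine gap in two places.

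\emph{First, your condition on the forest is too weak.} Requiring that each tree ``has an end limiting on $\wild(X')$'' is satisfied by a spanning tree of all of $G$. Likewise, ``each collapsed piece is contractible'' is not what makes a collapse a homotopy equivalence.
\begin{itemize}
\item Take three earrings with wild points $w_1,w_2,w_3$, joined by a tripod with center $m$. The tree formed by $m$ and the legs to $w_1,w_2$ meets your condition, and its closure is a contractible arc. Yet collapsing it identifies $w_1$ with $w_2$, so by Lemma \ref{techlemma} the quotient map is not even a $\pi_1$-isomorphism.
\item Take a ray that zigzags over a nondegenerate arc $L$ of wild points, kept locally connected by short rungs down to $L$. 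It is a one-ended spanning tree of its component. But its end accumulates on all of $L$, so neither collapsing it nor ``sliding it out along the ray'' is continuous.
\end{itemize}
What you need is that $\ov{T}\cap\wild(X')$ is a single point $\omega_T$ for every tree $T$. Then $\ov{T}=T\cup\{\omega_T\}$ is a dendrite that can be shrunk to $\omega_T$ rel $\wild(X')$.

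\emph{Second, the null-family claim does not follow.} That the components of $U$ form a null family says nothing about the infinitely many trees inside one component. Your ``inductive exhaustion'' fallback is not an exceptional case: every component is non-compact once $\wild(X')\neq\emptyset$. You also give no mechanism forcing the diameters to $0$. In the zigzag example, take as trees the individual horizontal passes, each with one rung. Each meets $\wild$ in one point, but their diameters are bounded below. The decomposition is then not upper semicontinuous, the quotient is not Hausdorff, and the collapsing homotopy is discontinuous along $L$.

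\emph{The missing lemma and one way to prove it.} You need: each component admits a spanning forest with $\ov{T}\cap\wild(X')=\{\omega_T\}$ and $\diam T\to 0$. One construction:
\begin{itemize}
\item Give $X'$ a convex metric (Bing--Moise). For each vertex $v$, let $e_v$ be the first edge of a shortest arc from $v$ to $\wild(X')$, and let $p(v)$ be its other endpoint (a vertex or a wild point).
\item Along such an arc, $d(\cdot,\wild(X'))$ drops at unit speed. Hence $p$ has no cycles, and $d(v,p(v))=d(v,\wild(X'))-d(p(v),\wild(X'))$.
\item Only finitely many vertices lie at distance $\geq\epsilon$ from $\wild(X')$. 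So the edges $e_v$ form a forest of one-ended trees whose parent chains converge, with total length at most $d(v,\wild(X'))$.
\item Consequently every $x\in T$ satisfies $d(x,\omega_T)\leq d(x,\wild(X'))$. A tree of diameter $>2\epsilon$ must therefore contain one of the finitely many vertices at distance $>\epsilon$ from $\wild(X')$, which gives the null property.
\end{itemize}
With such a forest, the homotopy inverse $X''\to X'$ sends the image of a non-forest edge $e$ from $u$ to $v$ to a path. The path runs from $\omega(u)$ down the tree to $u$, across $e$, then up to $\omega(v)$. The null property is exactly what makes this map, and both homotopies, continuous at $\wild(X')$.

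\emph{Edge case.} Your Step 3 presupposes $\wild(X)\neq\emptyset$. Under the paper's definition, a finite graph (for instance a point or $S^1$) is never in standard form, so that case must be excluded from the statement.
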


\begin{lemma}\label{odiameterlemma}
Suppose $X$ is a one-dimensional Peano continuum in standard form where $X\backslash \wild(X)$ is an infinite dijsoint union of open arcs $\{O_n\mid n\in \bbn\}$. If $d$ is a compatible metric on $X$, then $\ds\lim_{n\to\infty}\diam(\ov{O_n})=0$.
\end{lemma}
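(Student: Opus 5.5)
We argue by contradiction. Suppose $\diam(\ov{O_n})\not\to 0$. Then there exist $\epsilon>0$ and a subsequence $O_{n_k}$ with $\diam(\ov{O_{n_k}})\geq 3\epsilon$ for all $k$. The plan is to show that $X$ then fails to be locally connected at some point of $\wild(X)$. This contradicts $X$ being a Peano continuum.

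\textbf{Step 1 (choosing points).} For each $k$, pick a point $y_k\in O_{n_k}$ whose distance to the endpoint set $\ov{O_{n_k}}\backslash O_{n_k}$ is at least $\epsilon$. This is possible since $\ov{O_{n_k}}$ is an arc or simple closed curve of diameter $\geq 3\epsilon$ and its endpoint set has at most two points. The arc contains points at distance $\geq \epsilon$ from both endpoints, because a connected set covered by two closed $\epsilon$-balls with disjoint closures splits. More carefully: if every point of the arc were within $\epsilon$ of one of two points $a,b$, then $d(a,b)\leq 2\epsilon$ gives diameter $\leq 3\epsilon$ up to adjustments. Shrinking $\epsilon$ handles the strictness. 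If the endpoint set is empty, then $\ov{O_{n_k}}=O_{n_k}$ is a closed-open component of $X$, contradicting connectedness once $X$ contains more than one arc. By compactness, pass to a further subsequence so that $y_k\to y\in X$.

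\textbf{Step 2 (the limit is wild).} The point $y$ cannot lie in any $O_m$. Each $O_m$ is open in $X$, since it is a component of the open set $X\backslash\wild(X)$ and $X$ is locally connected. An open set containing $y$ would contain $y_k\in O_{n_k}$ for large $k$, which is impossible for $n_k\neq m$ because the $O_n$ are disjoint. Hence $y\in\wild(X)$.

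\textbf{Step 3 (failure of local connectedness).} By local connectedness, choose a connected open $U\ni y$ with $\diam(U)<\epsilon/2$. For large $k$ we have $y_k\in U$. Then $U\cap O_{n_k}$ is nonempty. Since $U$ is connected and meets $\wild(X)$ at $y$, the set $U$ is not contained in $O_{n_k}$. Hence $U$ meets the boundary of $O_{n_k}$ in $X$, which lies in $\ov{O_{n_k}}\backslash O_{n_k}$. Otherwise $U\cap O_{n_k}$ and $U\backslash \ov{O_{n_k}}$ would separate $U$; more precisely, $U=(U\cap O_{n_k})\cup(U\backslash O_{n_k})$, and $U\backslash O_{n_k}$ is closed in $U$, so it must meet $\ov{U\cap O_{n_k}}$ at an endpoint. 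This endpoint lies within $\diam(U)<\epsilon/2$ of $y_k$, contradicting the choice of $y_k$.

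The main care lies in Step 1, namely producing a point far from both endpoints. Taking $y_k$ to be the point of the arc maximizing the distance to the endpoint set handles this cleanly. If the maximum distance were $<\epsilon$, the arc would lie within the union of two $\epsilon$-balls. The subarcs near each endpoint meet, so $d(a,b)<2\epsilon$ and the diameter is $<4\epsilon$. So we start instead from diameter $\geq 4\epsilon$.
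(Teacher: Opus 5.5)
Your proof is correct, and it reorganizes the paper's argument rather than reproducing it.

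Both proofs share the same skeleton:
\begin{enumerate}
\item argue by contradiction and pass to a subsequence of arcs of large diameter;
\item force limit points into $\wild(X)$ because the $O_n$ are open;
\item use the fact that a small connected neighbourhood of a wild point that meets $O_n$ must contain an endpoint of $O_n$.
\end{enumerate}

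The paper's route is longer. It starts from two far-apart points $x_i,y_i\in\overline{O_{n_i}}$ and takes limits $x\neq y$. It uses path-connected neighbourhoods of $x$ and $y$ to sort the two endpoints of $O_{n_i}$ into disjoint neighbourhoods. It then uses connectedness of $O_{n_i}$ to find a third point $z_i$ outside both neighbourhoods. Finally it needs a neighbourhood $U_z$ of the limit $z$ that misses both endpoints. That last step requires controlling where the endpoints accumulate. As written, the paper only knows $a_{n_i}\in U_x\subseteq B_d(x,\delta)$, while $U_z$ is chosen merely to miss $\overline{B}_d(x,\delta/2)$. So an extra argument that $a_{n_i}\to x$ and $e_{n_i}\to y$ is needed.

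You instead do the work up front with an elementary metric observation. An arc or simple closed curve of diameter at least $4\epsilon$ contains a point at distance at least $\epsilon$ from its at most two endpoints. Otherwise connectedness forces the two $\epsilon$-balls about the endpoints to overlap, giving diameter less than $4\epsilon$. With this ``far'' point chosen in advance, one limit point and one connected neighbourhood of diameter less than $\epsilon/2$ give an immediate contradiction. This avoids the endpoint bookkeeping entirely, and your boundary-crossing argument needs only connectedness, not path-connectedness.

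Two small remarks. Your first estimate with $3\epsilon$ is wrong as stated, since the bound you actually get is $4\epsilon$, but you correct this yourself. The discussion of an empty endpoint set is unnecessary, because $\overline{O_n}$ is compact while $O_n\cong(0,1)$ is not.
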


\begin{proof}
For each $n$, fix a map $h_n:[0,1]\to \ov{O_n}$ that restricts to a homeomorphism $(0,1)\to O_n$ and such that $h_n(0)=a_n\in\wild(X)$ and $h_n(1)=e_n\in\wild(X)$. Note that $\{a_n,e_n\}$ separates $\wild(X)$ and $\ov{O_n}$. In particular, any path starting in $\wild(X)$ and ending in $\ov{O_n}$ must meet $\{a_n,e_n\}$.

Suppose, to obtain a contradiction, that there exists $n_1<n_2<n_3<\cdots$ and $\epsilon>0$ such that $\diam(\ov{O_{n_i}})\geq \epsilon$ for all $i\in\bbn$. Pick points $x_{i},y_i\in \ov{O_{n_i}}$ with $d(x_i,y_i)\geq \epsilon$. Since $X$ is compact, we may replace $\{n_i\}_{i\in\bbn}$ with a subsequence so that $\{x_i\}_{i\in\bbn}\to x$ and $\{y_i\}_{i\in\bbn}\to y$ in $X$. Since the sets $O_n$ are open, we must have $x,y\in \wild(X)$. For sufficiently large $i$, we have $y_i\notin B_d(x,\epsilon/2)$ for sufficiently large $i$. Thus $x\neq y$.

Let $\delta=\frac{d(x,y)}{2}$ and find path-connected open sets $U_x$ and $U_y$ such that $x\in U_x\subseteq B_d(x,\delta)$ and $y\in U_y\subseteq B_d(y,\delta)$. By taking cofinal subsequences, we may assume that $x_i\in U_x$ and $y_i\in U_y$ for all $i$. Any path in $U_x$ from $x$ to $x_i$ must meet $\{a_{n_i},e_{n_i}\}$ and any path in $U_y$ from $y$ to $y_i$ must also meet $\{a_{n_i},e_{n_i}\}$. If $a_{n_i}\in U_y$ for some $i$, switch the labellings of the endpoints $a_{n_i}$ and $e_{n_i}$. Thus we can arrange that $a_{n_i}\in U_x$ and $e_{n_i}\in U_y$ for all $i$. Since each set $O_{n_i}$ is connected, there must exist $z_i\in O_{n_i}\backslash (U_x\cup U_y)$. Again, replacing $\{n_i\}_{i\in\bbn}$ with a subsequence, we may assume $\{z_i\}_{i\in\bbn}\to z$ in $X$ and again we must have $z\in \wild(X)$. However, it must be that $z\notin \ov{B}_d(x,\delta/2)\cup \ov{B}_d(y,\delta/2)$. Find a path-connected open neighborhood $U_z$ of $z$ in $X$ such that $U_z$ is disjoint from $\ov{B}_d(x,\delta/2)\cup \ov{B}_d(y,\delta/2)$. For sufficiently large $i$, we have $z_i\in U_z$ and since $U_z$ is path-connected, $U_z$ must then contain either $a_{n_i}$ or $e_{n_i}$. However, this is a contradiction.
\end{proof}

\begin{corollary}\label{subsequencecor}
Suppose $X$ is a one-dimensional Peano continuum in standard form where $X\backslash \wild(X)$ is an infinite dijsoint union of open arcs $\{O_n\mid n\in \bbn\}$ and $a_n\in \ov{O_n}\backslash O_n$. Suppose that $n_1<n_2<n_3<\cdots$ and that $\{a_{n_i}\}_{i\in\bbn}\to x$ for $x\in \wild(X)$. Then for every neighborhood $U$ of $x$, we have $\ov{O_{n_i}}\subseteq U$ for all but finitely many $i$.
\end{corollary}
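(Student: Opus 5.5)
The plan is to combine Lemma \ref{odiameterlemma} with the fact that each $a_{n_i}$ lies in $\ov{O_{n_i}}$. Fix a compatible metric $d$ on $X$ and a neighborhood $U$ of $x$. Choose $\epsilon>0$ with $B_d(x,\epsilon)\subseteq U$.

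First, since $\{a_{n_i}\}_{i\in\bbn}\to x$, there is $N_1$ such that $d(a_{n_i},x)<\epsilon/2$ for all $i\geq N_1$. Second, since $n_1<n_2<\cdots$ is a strictly increasing sequence of natural numbers, we have $n_i\to\infty$. Lemma \ref{odiameterlemma} gives $\diam(\ov{O_n})\to 0$, so there is $N_2$ with $\diam(\ov{O_{n_i}})<\epsilon/2$ for all $i\geq N_2$.

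For $i\geq\max\{N_1,N_2\}$ and any $y\in\ov{O_{n_i}}$, note that $a_{n_i}\in\ov{O_{n_i}}$. The triangle inequality then gives
\[d(y,x)\leq d(y,a_{n_i})+d(a_{n_i},x)<\epsilon/2+\epsilon/2=\epsilon.\]
Hence $\ov{O_{n_i}}\subseteq B_d(x,\epsilon)\subseteq U$ for all but finitely many $i$.

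There is no real obstacle. All the work is in Lemma \ref{odiameterlemma}. The only points to check are that the subsequence indices tend to infinity, which holds because they are strictly increasing, and that $a_{n_i}$ belongs to the closure $\ov{O_{n_i}}$, so that the diameter bound applies to distances from $a_{n_i}$.
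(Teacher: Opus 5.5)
Your proposal is correct and follows essentially the same argument as the paper's proof. Both choose $\epsilon$ with $B_d(x,\epsilon)\subseteq U$, get $a_{n_i}$ within $\epsilon/2$ of $x$, get $\diam(\ov{O_{n_i}})<\epsilon/2$ from Lemma \ref{odiameterlemma} (you also spell out that $n_i\to\infty$, which the paper leaves implicit), and conclude by the triangle inequality.
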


\begin{proof}
Let $d$ be a compatible metric on $X$ and find $\epsilon>0$ with $B_d(x,\epsilon)\subseteq U$. Find $i_0$ such that $a_{n_i}\in B_d(x,\epsilon/2)$ for all $i\geq i_0$. By Lemma \ref{odiameterlemma}, there exists $i_1\geq i_0$ such that $\diam(\ov{O_n})<\epsilon/2$. Thus for all $i\geq i_1$, we have $\ov{O_{n_i}}\subseteq B_d(x,\epsilon)\subseteq U$.
\end{proof}

\begin{lemma}\label{opencharlemma}
Suppose that $X$ is a one-dimensional Peano continuum in standard form where $X\backslash \wild(X)$ is the disjoint union of open arcs $\{O_n\mid n\in N\}$. A set $U\subseteq X$ is open if and only if
\begin{enumerate}
\item $U\cap \wild(X)$ is open in $\wild(X)$,
\item $U\cap \ov{O_n}$ is open in $\ov{O_n}$ for each $n\in\bbn$,
\item if whenever $n_1<n_2<n_3<\cdots$ is such that $x_{n_i}\in \partial O_{n_i}$ and $\{x_{n_i}\}_{i\in\bbn}\to x$ in $\wild(X)$ for a point $x\in U\cap \wild(X)$, then $\ov{O_{n_i}}\subseteq U$ for all but finitely many $i$.
\end{enumerate}
\end{lemma}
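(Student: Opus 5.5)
The forward direction is routine and I would dispose of it first. If $U$ is open, then (1) and (2) hold because the subspace topology is inherited. For (3), take $x\in U\cap\wild(X)$ and a sequence $x_{n_i}\in\partial O_{n_i}$ converging to $x$. Since $U$ is a neighborhood of $x$, Corollary \ref{subsequencecor} gives $\ov{O_{n_i}}\subseteq U$ for all but finitely many $i$. The corollary is stated for a chosen endpoint $a_n$, but its proof only uses that the chosen endpoint converges to $x$, so it applies to $x_{n_i}$ directly.

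For the converse, assume (1)--(3) and suppose, for contradiction, that $U$ is not open. Since $X$ is metrizable, there is a point $x\in U$ and a sequence $y_k\to x$ with $y_k\notin U$.
\begin{enumerate}
\item \emph{Case $x\in O_n$ for some $n$.} Because $O_n$ is open in $X$ and $U\cap\ov{O_n}$ is open in $\ov{O_n}$ by (2), the set $U\cap O_n$ is open in $O_n$, hence open in $X$. This is a neighborhood of $x$ inside $U$, contradicting the choice of $y_k$.
\item \emph{Case $x\in\wild(X)$.} By (1), only finitely many $y_k$ can lie in $\wild(X)$; otherwise those $y_k$ would be points of $\wild(X)\backslash U$ converging to $x\in U\cap\wild(X)$. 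So after passing to a subsequence, each $y_k$ lies in some arc $O_{m_k}$.
\end{enumerate}

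In the second case I would split once more, according to whether the indices $m_k$ are bounded.
\begin{enumerate}
\item \emph{Some index $m$ occurs infinitely often.} Then $x\in\ov{O_m}$, so $x$ is an endpoint of $O_m$. Condition (2) makes $U\cap\ov{O_m}$ a neighborhood of $x$ in $\ov{O_m}$, which must eventually contain those $y_k$, a contradiction.
\item \emph{Otherwise.} Pass to a subsequence with $m_1<m_2<\cdots$ strictly increasing, and choose endpoints $x_{m_k}\in\partial O_{m_k}$.
\end{enumerate}

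The main obstacle is to show $x_{m_k}\to x$ in the second subcase. Lemma \ref{odiameterlemma} gives $\diam(\ov{O_{m_k}})\to 0$, and $y_k\in\ov{O_{m_k}}$, so $d(x_{m_k},x)\le d(x_{m_k},y_k)+d(y_k,x)\to 0$. Condition (3) then yields $\ov{O_{m_k}}\subseteq U$ for all but finitely many $k$. This puts $y_k\in U$ for large $k$, a contradiction. Because metrizability reduces openness to this sequential check, the argument is complete once this convergence is in hand.
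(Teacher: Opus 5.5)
Your proof is correct. For the converse it takes a genuinely different route from the paper.

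\textbf{The paper's route.} The paper lets $X'$ be the set $X$ with the topology consisting of all sets satisfying (1)--(3). It notes, via Corollary \ref{subsequencecor} exactly as in your forward direction, that the identity $X'\to X$ is continuous. It then proves that $X'$ is compact, so the identity is a continuous bijection from a compact space onto a Hausdorff space, hence a homeomorphism.

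\textbf{Your route.} You argue directly. Metrizability of $X$ reduces openness of $U$ to a sequential check at each point of $U$. You then dispose of the cases in turn: $x\in O_n$; infinitely many $y_k$ in $\wild(X)$; infinitely many $y_k$ in a single arc. The last case, $y_k$ in arcs of strictly increasing index, you handle via the estimate $d(x_{m_k},x)\le \diam(\ov{O_{m_k}})+d(y_k,x)$ and Lemma \ref{odiameterlemma}.

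\textbf{What each buys.} Your route does not need to check that (1)--(3) define a topology, nor that $\wild(X)$ and the $\ov{O_n}$ are compact in $X'$. It also makes explicit the one nontrivial input. In the paper's compactness argument, the concluding step ``$y_i\in V$ for sufficiently large $i$'' cannot follow from $y_i\to x$ in $X$ alone, since $V$ is only known to be open in $X'$. It needs exactly your observation: the endpoints of $O_{n_i}$ converge to $x$ by Lemma \ref{odiameterlemma}, so condition (3) applies to the member of the finite subcover containing $x$. The paper's argument is more structural (a compact-to-Hausdorff bijection). Yours uses only first countability and is more elementary.

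Your pigeonhole split also silently covers the case where $N$ is finite, in which Lemma \ref{odiameterlemma} is never invoked.
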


\begin{proof}
Let $X'$ be the space with the same underlying set as $X$ but with the topology consisting of sets $U$ that satisfy Conditions (1)-(3) (it is straightforward to show that these form a topology). Suppose $V$ is open in $X$. Then (1) and (2) are clear and that $V$ satisfies Condition (3) follows from Corollary \ref{subsequencecor}. This shows that the identity function $id:X'\to X$ is continuous. That $id: X'\to X$ is a homeomorphism will follow once we show that $X'$ is compact.

If $U$ is open in $X'$, then $U\cap \wild(X)$ is open in $\wild(X)$ as a subspace of $X'$. By (1), $U\cap \wild(X)$ is open in $\wild(X)$ as a subspace of $X$. Thus $id: X'\to X$ maps $\wild(X)$ homeomorphically onto itself. Since $\wild(X)$ is compact as a subspace of $X$, $\wild(X)$ is compact as a subspace of $X$'.

Let $\scrv$ be an open cover of $X'$ (by sets satisfying (1)-(3)). Since $\wild(X)$ is compact as  a a subspace of $X'$, there exists $V_1,V_2,\dots,V_m\in\scru$ such that $\wild(X)\subseteq V=\bigcup_{i=1}^{m}V_i$. Since the sets $\ov{O_n}$ are compact, it suffices to show that $O_n\subseteq V$ for all but finitely many $n$. If, to the contrary, there exists $n_1<n_2<n_3<\cdots$ and points $y_i\in O_{n_i}\backslash V$ for all $i$. By taking a subsequence, we may assume that $\{y_i\}_{i\in\bbn}\to x$ for some $x\in X$. Since the sets $O_n$ are all open, we must have $x\in \wild(X)\subseteq V$. But then, $y_i\in V$ for sufficiently large $i$, which is a contradiction. We conclude that $\scrv$ admits a finite subcover of $X$, proving the compactness of $X'$.
\end{proof}

\section{Coarsifications of standard form Peano continua}

Our general construction extends the construction of the ``coarse earring" from \cite{brazasmatosinvolve}, which we briefly recall here. Consider the infinite earring space $\bbe$ illustrated below in Figure \ref{fig1} and the countable subspace $\scrc\bbe =\{b_0\}\cup \bigcup_{n\in\bbn}\{a_n,b_n,c_n\}$. We give $\scrc\bbe$ the $T_0$ topology generated by the following basic open sets: $\{a_n\}$, $\{c_n\}$, $\{a_n,b_n,c_n\}$, $\scrc\bbe\backslash\{b_1,b_2,b_3,\dots,b_n\}$ for $n\in\bbn$.

The general idea of the construction of $\scrc\bbe$ from $\bbe$ is to replace the $n$-th open arc of $\bbe\backslash\{b_0\}$ by the three-point set $\{a_n,b_n,c_n\}$ so that $\{b_0\}\cup \{a_n,b_n,c_n\}$ is a pseudocircle. At the same time, we must preserve the infinitary operations existing in the fundamental group of $\bbe$. This is achieved by taking the infinite sets $\scrc\bbe\backslash\{b_1,b_2,b_3,\dots,b_n\}$ to from a nested neighborhood base at $b_0$.

\begin{figure}[t]
\centering \includegraphics[height=2.3in]{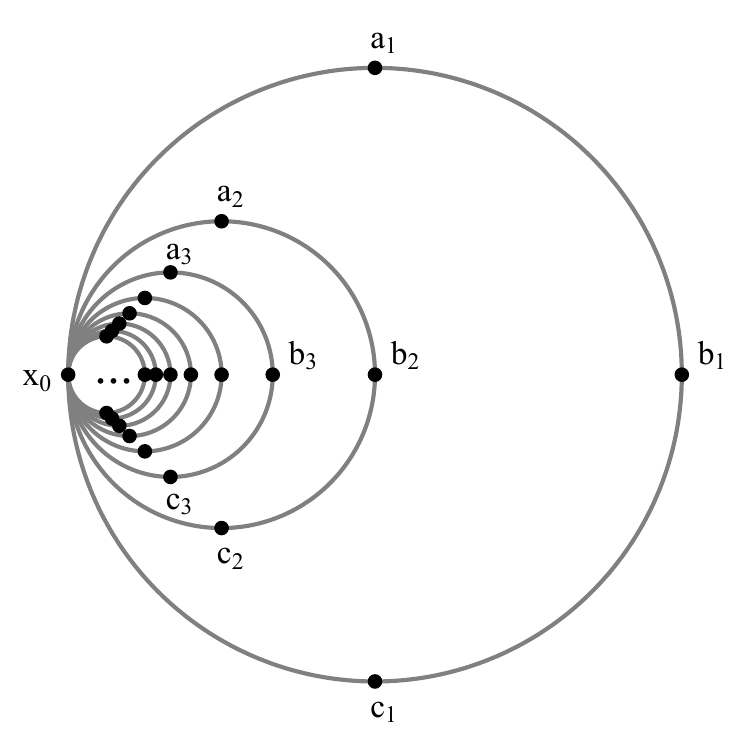}
\caption{\label{fig1} The infinite earring space $\bbe$ consisting of a shrinking infinite sequence of circles attached at a common point $x_0$. The ``coarse earring" can be constructed as the countable subset $\scrc\bbe =\{x_0\}\cup \bigcup_{n\in\bbn}\{a_n,b_n,c_n\}$ equipped with a $T_0$ topology.}
\end{figure} 

We extend this construction a one-dimensional Peano continuum $X$ that is assumed to be in standard form. Enumerate the connected components of $X\backslash \wild(X)$ as $O_1,O_2,O_3,\dots$. Let $N$ be the set of $n$ for which $O_n$ is defined. For each $n\in N$ fix a map $h_n:[0,1]\to \ov{O_n}$ that restricts to a homeomorphism $(0,1)\to O_n$. Define $a_n=h_n(0)$ and $e_n=h_n(1)$. Recall that $a_n=e_n$ if $\ov{O_n}$ is a simple closed curve. For each $n\in N$, set $B_n=h_n((0,1/2))$, $C_n=h_n(\{1/2\})$, and $D_n=h_n((1/2,1))$. Let $\scrc X$ be the quotient space of $X$ where each of the sets $B_n$, $C_n$, and $D_n$ is identified to a point.

Let $q:X\to \scrc X$ be the quotient map and $b_n,c_n,d_n$ be the points in $\scrc X$, which are the respective images of $B_n,C_n,D_n$. Since $\wild(X)\cup \{c_n\mid n\in N\}$ is closed in $X$ and saturated with respect to $q$, $q$ maps the subspace $\wild(X)\cup \{c_n\mid n\in N\}$ homeomorphically onto its image, which is also closed in $\scrc X$. We identify $\wild(X)\cup \{c_n\mid n\in N\}$ with it's image so that $q$ is the identity on $\wild(X)\cup \{c_n\mid n\in N\}$ (see Figure \ref{fig5}). We refer to $\scrc X$ as the \textit{coarsification} of $X$ and $q:X\to \scrc X$ as the \textit{coarsification map}.

\begin{figure}[t]
\centering \includegraphics[height=1.4in]{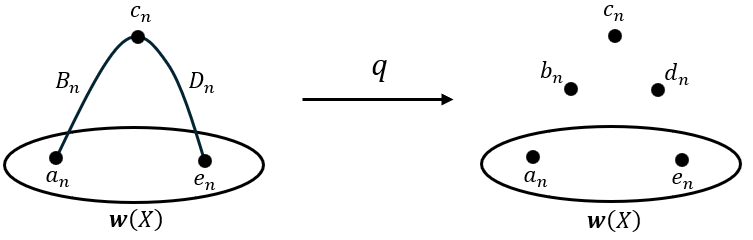}
\caption{\label{fig5} The map $q:X\to \scrc X$ collapses each open arcs $B_n$ and $D_n$ to respective points $b_n$ and $d_n$.}
\end{figure} 

\begin{remark}\label{countablespaceremark}
If $\wild(X)$ is countable, then $\scrc X=\wild(X)\cup \bigcup_{n\in N}\{b_n,c_n,d_n\}$ is countable.
\end{remark}

\begin{lemma}\label{openlemma}
The map $q:X\to \scrc X$ is an open map. 
\end{lemma}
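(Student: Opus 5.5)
Since $q$ is a quotient map, a set $W\subseteq\scrc X$ is open exactly when $q^{-1}(W)$ is open in $X$. So for an open $U\subseteq X$ it suffices to show that the saturation $q^{-1}(q(U))$ is open in $X$. The fibers of $q$ are the singletons of $\wild(X)\cup\{c_n\mid n\in N\}$ and the open arcs $B_n$, $D_n$. Hence
\[
q^{-1}(q(U))=U\cup\bigcup\{B_n\mid B_n\cap U\neq\emptyset\}\cup\bigcup\{D_n\mid D_n\cap U\neq\emptyset\}.
\]
Each $B_n$ and $D_n$ is open in $X$, because it is an open subset of the open arc $O_n$, which is open in $X$. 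Therefore $q^{-1}(q(U))$ is a union of open sets, and so it is open. This proves that $q(U)$ is open.

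It remains to justify that $O_n$ is open in $X$. Since $X$ is in standard form, $\wild(X)$ is closed by Lemma \ref{closedlemma}. The set $X\backslash\wild(X)$ is an open subset of the locally connected space $X$, so its components are open; the $O_n$ are exactly those components.

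Nothing here seems to be a serious obstacle. The only point to check carefully is that the fibers of $q$ are as listed, and in particular that $C_n$ and the points of $\wild(X)$ are singletons that remain unidentified. The delicate convergence behaviour described in Lemma \ref{opencharlemma} does not enter the argument at all, because saturating an open set only adds whole open arcs.
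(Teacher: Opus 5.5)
Your argument is correct, and it takes a more direct route than the paper. The paper verifies that the saturation $q^{-1}(q(V))$ is open by checking the three conditions of the characterization in Lemma \ref{opencharlemma}: openness on $\wild(X)$, openness on each $\ov{O_n}$, and the convergence condition (3). Condition (3) is inherited from $V$ because $q$ is injective on $\wild(X)$.

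You instead observe that the only non-degenerate fibers of $q$ are the arcs $B_n$ and $D_n$. These are open in $X$, since each $O_n$ is a component of the open set $X\backslash\wild(X)$ in a locally connected space. So the saturation of an open set is that set together with some open arcs, and is therefore open. This is the general fact that a quotient map whose fibers are each either a singleton or an open set is an open map. It needs only the closedness of $\wild(X)$, which comes from Lemma \ref{closedlemma} because $X$ is a Peano continuum, not specifically because $X$ is in standard form. It avoids Lemma \ref{odiameterlemma}, Corollary \ref{subsequencecor}, and Lemma \ref{opencharlemma} altogether.

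The paper's route costs more here, but it uses machinery the paper needs anyway. Condition (3) of Lemma \ref{opencharlemma} is invoked again in the continuity argument of Lemma \ref{liftinglemma}. So your proof does not make that characterization redundant; it only shows it is unnecessary for this particular lemma.
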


\begin{proof}
Suppose $V$ is a non-empty open subset of $X$. Since $q$ is a quotient map, it suffices to check that $q^{-1}(q(V))$ is open in $X$. We use the characterization in Lemma \ref{opencharlemma}. First, $V\cap \wild(X)=q^{-1}(q(V))\cap \wild(X)$ is open in $\wild(X)$. For given $n\in N$, $V\cap O_n$ is open in $O_n$. If $c_n\in V$, then $V$ meets both $B_n$ and $D_n$ giving $q^{-1}(q(V))\cap O_n=O_n$. Otherwise, $q^{-1}(q(V))\cap O_n$ is one of the sets $B_n$, $D_n$, or $B_n\cup D_n$ whenever $V\cap O_n$ is non-empty. In any of these cases, $q^{-1}(q(V))\cap O_n$ is open in $O_n$. 

Lastly, we check that $q^{-1}(q(V))$ satisfies the third condition of Lemma \ref{opencharlemma}. Suppose $n_1<n_2<n_3<\cdots$ in $N$ are such that $x_{n_i}\in \partial O_{n_i}$ and $\{x_{n_i}\}_{i\in\bbn}\to x$ in $\wild(X)$ for a point $x\in q^{-1}(q(V))\cap \wild(X)$. Since $q$ is bijective on $\wild(X)$, $x\in V$. Since the third condition from Lemma \ref{opencharlemma} applies to $V$, we have that $\ov{O_{n_i}}\subseteq V\subseteq q^{-1}(q(V))$ for all but finitely many $i$.
\end{proof}

\begin{remark}\label{boundaryremark}
For given $n\in N$, the closure of $B_n$ is the arc $\ov{B_n}=\{a_n\}\cup B_n\cup \{c_n\}$. Since $\{a_n,b_n\}$ and $\{b_n,c_n\}$ are not closed in $\scrc X$, it must be that $\{a_n,b_n,c_n\}=q^{-1}(\ov{B_n})=\ov{b_n}$ in $\scrc X$. Moreover, since $\{b_n\}$ is open in $\scrc X$, it has topological boundary $\partial \{b_n\}=\{a_n,c_n\}$. Similarly, we have $\ov{d_n}=\{c_n,d_n,e_n\}$ and $\partial\{d_n\}=\{c_n,e_n\}$ in $\scrc X$.
\end{remark}

Recall that a space $B$ is \textit{perfectly normal} if for every pair of disjoint closed sets $A_0,A_1\subseteq B$, there exists a continuous function $f:B\to \ui$ such that $f^{-1}(0)=A_0$ and $f^{-1}(1)=A_1$. It is well-known that metrizable spaces are perfectly normal.

\begin{lemma}\label{extension1}
If $B$ is perfectly normal, $A\subseteq B$ is closed and $h:A\to \{0,1\}$ is a continuous function, then there exists a continuous function $k:B\to [0,1]$ such that $k|_{A}=h$ and $k^{-1}(\{0,1\})=A$.
\end{lemma}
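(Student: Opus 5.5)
The idea is to split $A$ according to the values of $h$ and apply perfect normality twice. Set $A_0=h^{-1}(0)$ and $A_1=h^{-1}(1)$. Since $h$ is continuous and $\{0\},\{1\}$ are closed (indeed clopen) in $\{0,1\}$, both sets are closed in $A$. Because $A$ is closed in $B$, they are closed in $B$ as well, and they are disjoint with $A_0\cup A_1=A$. If one of them is empty the construction below still works, with the convention that $\emptyset$ is the zero set of the constant function $1$. So we may treat the general case.

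The main step is to build a function that vanishes exactly on $A$ and place it between the two pieces. By perfect normality applied to the disjoint closed sets $A_0$ and $A_1$, there is a continuous $f:B\to\ui$ with $f^{-1}(0)=A_0$ and $f^{-1}(1)=A_1$. This $f$ already restricts to $h$ on $A$, but $f$ may take the values $0$ or $1$ off $A$ only if... in fact it cannot: $f^{-1}(\{0,1\})=A_0\cup A_1=A$ exactly. So $k=f$ works directly, and the statement follows immediately from the definition of perfect normality as recalled in the paper.

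The only point needing care is the degenerate case where $A_0$ or $A_1$ is empty, since the recalled definition might be read as applying only to nonempty sets. In that case I would argue separately. In a perfectly normal space every closed set $F$ is a zero set: either apply the definition to the pair $F,\emptyset$, or, if the definition is restricted, fix a point and derive this from the usual $G_\delta$ characterization. Choose $g:B\to\ui$ with $g^{-1}(0)=A$ (or use $g\equiv 1$ if $A=\emptyset$). If $A_1=\emptyset$, set $k=g/2$. Then $k|_A=0=h$ and $k$ never equals $1$, so $k^{-1}(\{0,1\})=A$. If $A_0=\emptyset$, set $k=1-g/2$ by the symmetric argument. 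I expect no genuine obstacle here; the only subtlety is this bookkeeping for empty pieces.
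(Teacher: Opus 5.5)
Your argument is correct and is exactly the paper's proof: set $A_0=h^{-1}(0)$ and $A_1=h^{-1}(1)$, apply perfect normality to obtain $k$ with $k^{-1}(0)=A_0$ and $k^{-1}(1)=A_1$, and observe that $k|_A=h$ and $k^{-1}(\{0,1\})=A_0\cup A_1=A$. Your separate treatment of an empty $A_0$ or $A_1$ is harmless but unnecessary, since the definition recalled in the paper applies to every pair of disjoint closed sets, including empty ones.
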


\begin{proof}
Since $A_0=h^{-1}(0)$ and $A_1=h^{-1}(1)$ are closed disjoint sets, there exists a continuous function $k:B\to \ui$ such that $k^{-1}(0)=A_0$ and $k^{-1}(1)=A_1$. In particular, $k|_{A}=h$ and $k^{-1}(\{0,1\})=A_1\cup A_2=A$
\end{proof}

\begin{lemma}\label{liftinglemma}
Let $q:X\to \scrc X$ be the coarsification map, $B$ be a locally path-connected metrizable space, and $A\subseteq B$ be a closed subspace. If $f:A\to X$ and $g:B\to \scrc X$ are maps such that $q\circ f=g|_{A}$, then there exists a lift $\wt{g}:B\to X$ such that the following diagram commutes.
\[\xymatrix{
A \ar[r]^-{f} \ar@{_{(}->}[d] & X \ar[d]^-{q} \\
B \ar@{-->}[ur]^-{\wt{g}} \ar[r]_-{g} & \scrc X
}\]
\end{lemma}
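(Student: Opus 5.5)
We build $\wt{g}$ piece by piece: take it to be $g$ over the part of $\scrc X$ where $q$ is injective, and lift separately over each collapsed arc. On $g^{-1}(\wild(X)\cup\{c_n\mid n\in N\})$ we set $\wt{g}=g$, using the identification of this set with its preimage. The remaining points are those mapped into $\{b_n\}$ or $\{d_n\}$ for some $n$. The set $U_n=g^{-1}(b_n)$ is open in $B$, since $\{b_n\}$ is open in $\scrc X$. By Remark \ref{boundaryremark}, $\ov{U_n}\subseteq g^{-1}(\{a_n,b_n,c_n\})$, and its boundary lies in $g^{-1}(\{a_n,c_n\})$. The same holds for $V_n=g^{-1}(d_n)$ with $\{c_n,e_n\}$.

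Next we define $\wt{g}$ on each $\ov{U_n}$. Put $F_n=\partial U_n\cup(A\cap\ov{U_n})$, which is closed in the metrizable (hence perfectly normal) space $\ov{U_n}$. On $F_n$ we need a map into $\ov{B_n}=h_n([0,1/2])$. On $\partial U_n$ it must send points of $g^{-1}(a_n)$ to $a_n$ and points of $g^{-1}(c_n)$ to $c_n$. On $A\cap\ov{U_n}$ it must equal $f$. These agree on the overlap because $q\circ f=g|_A$, and the glued map is continuous because $\partial U_n$ and $A\cap \ov{U_n}$ are closed. We then extend to all of $\ov{U_n}$ so that the interior $U_n\setminus A$ lands in the open arc $B_n$. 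To do this, choose by Lemma \ref{extension1} a function $k:\ov{U_n}\to[0,1]$ with $k^{-1}(0)=g^{-1}(a_n)\cap\partial U_n$ and $k^{-1}(1)=g^{-1}(c_n)\cap\partial U_n$. Because $\ov{B_n}$ is an arc (an AR), Tietze gives an extension of $f$ on $A\cap \ov{U_n}$, and we combine it with $k$ so that on $U_n\setminus A$ the value stays strictly inside $B_n$. Concretely, work in arc coordinates $[0,1/2]$ and use a convex combination weighted by the distance to $A$, which gives interior values away from $\partial U_n$. The construction on $\ov{V_n}$ is identical. By construction $q\circ\wt{g}=g$ on these pieces, and the definitions agree on the overlaps $\partial U_n\subseteq g^{-1}(\{a_n,c_n\})$.

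The main obstacle is global continuity of $\wt{g}$ at points $x\in B$ with $g(x)\in\wild(X)$, where infinitely many of the sets $U_n,V_n$ may accumulate. We argue as follows, and this is where local path-connectedness of $B$ enters. Let $W$ be an open neighborhood of $g(x)$ in $X$. By Lemma \ref{opencharlemma}(3) and Corollary \ref{subsequencecor}, $q(W)$ is open in $\scrc X$ (Lemma \ref{openlemma}). Shrinking $W$ first, we may assume that $W$ contains $\ov{O_n}$ whenever it meets $\partial O_n$, except for finitely many $n$. Take a path-connected neighborhood $P$ of $x$ with $g(P)\subseteq q(W)$. If $P$ meets $U_n$, then a path in $P$ from $x$ into $U_n$ must, by connectedness, pass through $\partial U_n$, which maps to $a_n$ or $c_n$, and these points lie in $W$. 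For all but finitely many $n$ this forces $\ov{O_n}\subseteq W$, and hence $\wt{g}(P\cap \ov{U_n})\subseteq W$. The finitely many exceptional $n$ are handled by continuity of the individual extensions, after shrinking $P$. The same argument covers points with $g(x)=c_n$, where only two pieces are involved. Finally, continuity at points of the open sets $U_n$, $V_n$ is immediate, and $\wt{g}|_A=f$ holds by construction.
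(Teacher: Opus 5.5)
Your construction has the same architecture as the paper's. You set $\wt{g}=q^{-1}\circ g$ over $\wild(X)\cup\{c_n\mid n\in N\}$. You build lifts over the closures of $g^{-1}(b_n)$ and $g^{-1}(d_n)$ into the arcs $\ov{B_n}$, $\ov{D_n}$ using perfect normality. You get continuity over wild points from Lemma~\ref{openlemma} and the shrinking of the arcs.

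Your local step is in fact more careful than the paper's. The paper works with $g^{-1}(b_n)\setminus A$ and asserts that $g$ maps its boundary into $\{a_n,c_n\}$. This fails as soon as a point $z\in A$ with $g(z)=b_n$ lies on that boundary; it happens already for path lifting ($B=\ui$, $A=\{0\}$, $g\equiv b_n$, $f(0)\in B_n$). Lemma~\ref{extension1} then cannot produce the required extension. Your gluing on $F_n=\partial U_n\cup(A\cap\ov{U_n})$, followed by a Tietze extension and a convex combination with $k$ weighted by the distance to $A$, handles exactly this case. One precision: the map you extend by Tietze must be the glued map on $F_n$, not just $f$. Otherwise the combination need not take the prescribed endpoint values at points of $\partial U_n\setminus A$ near $A$.

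The continuity paragraph, however, relies on a claim that is false in general. You cannot always shrink $W$ so that, for all but finitely many $n$, it contains $\ov{O_n}$ whenever it meets $\partial O_n$. Consider the dyadic arc space $\bbd$: its wild set is the base arc $\ui\times\{0\}$, and its open arcs are the semicircles joining consecutive points of $2^{-k}\mathbb{Z}\cap\ui$ for every $k$. Suppose an open $W'\ni(1/2,0)$ had this property. Take $k$ above the levels of the finitely many exceptions; then $W'$ must absorb the whole chain of level-$k$ arcs from $(1/2,0)$ to $(0,0)$, so $W'\not\subseteq B_d((1/2,0),1/4)$. Also, the point of $\partial U_n$ that your path meets may be sent to $c_n$, which is not in $\partial O_n$.

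The repair is to use two neighborhoods and Lemma~\ref{odiameterlemma}:
\begin{itemize}
\item Let $B_d(g(x),\epsilon)\subseteq W$ and $W'=B_d(g(x),\epsilon/2)$. Every $\ov{O_n}$ of diameter less than $\epsilon/2$ that meets $W'$ lies in $W$, and only finitely many $n$ violate this diameter bound.
\item Let $P=g^{-1}(q(W'))$, which is open by Lemma~\ref{openlemma}. If $y\in P\cap U_n$, then $b_n\in q(W')$, so $W'\cap B_n\neq\emptyset$. Hence $\wt{g}(y)\in\ov{O_n}\subseteq W$ for all but finitely many $n$, and likewise for $V_n$.
\item Points of $P$ lying over $\wild(X)\cup\{c_n\mid n\in N\}$ are sent into $W'$ directly. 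The finitely many remaining $n$ are handled as you describe.
\end{itemize}
No paths in $B$ are needed, and hence no local path-connectedness of $B$; the paper does not use the latter either. The paper's own version of this step applies Condition~(3) of Lemma~\ref{opencharlemma} to boundary points merely known to lie in $V$, without the required convergence to $x$. The same diameter argument repairs it.
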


\begin{proof}
Recall that $q$ is a homeomorphism on the closed set $\wild(X)\cup\{c_n\mid n\in N\}$. Let $B_0=g^{-1}(\wild(X)\cup\{c_n\mid n\in N\})$. Define $k:A\cup B_0\to X$ by $k(b)=q^{-1}(g(b))$ for $b\in B_0$ and $k(a)=f(a)$ for $a\in A$. If $z\in A\cap B_0$, then $q(f(z))=g(z)$ and thus $f(z)=q^{-1}(g(z))$. Hence, $k$ is well-defined and continuous by the Pasting Lemma. 

For each $n\in N$, let $U(b_n)=g^{-1}(b_n)\backslash A$ and $U(d_n)=g^{-1}(d_n)\backslash A$. Since $\bigcup_{n\in\bbn}\{b_n,d_n\}$ is a discrete subspace of $\scrc X$ and $A$ is closed, $U(b_n)$ and $U(d_n)$ are open for all $n\in\bbn$. Moreover, $B\backslash (A\cup B_0)$ is the disjoint union of the open sets $U(b_n)$ and $U(d_n)$ for $n\in N$. It follows that for each $n\in N$, we have $\partial U(b_n)\subseteq A\cup B_0$. Recall from Remark \ref{boundaryremark} that $\ov{b_n}=\{a_n,b_n,c_n\}$ and $\partial\{b_n\}=\{a_n,c_n\}$. Thus we must have $g(\partial U(b_n))\subseteq \{a_n,c_n\}$ for each $n\in N$. The same argument also gives $g(\partial U(d_n))\subseteq \{c_n,e_n\}$ for each $n\in N$.

For each $n\in N$, let $\beta_n=k|_{\partial U(b_n)}:\partial U(b_n)\to X$. Since $q$ is bijective on $X\backslash \bigcup_{n\in N}\{b_n,d_n\}$, we have $\beta_n(\partial U(b_n))\subseteq \{a_n,c_n\}$. Since $\ov{A_n}$ is an arc in $X$ with endpoints $a_n$ and $c_n$, since $\ov{U(b_n)}$ is perfectly normal, and since $\partial U(b_n)$ is closed in $\ov{U(b_n)}$, Lemma \ref{extension1} gives an extension $h_{n}:\ov{U(b_n)}\to \ov{A_n}$ such that $(h_n)|_{\partial U(b_n)}=\beta_n$ and $(h_n)^{-1}(\{a_n,c_n\})=\partial U(b_n)$. In particular, $h_{n}^{-1}(B_n)=U(b_n)$. 

Similarly, let $\delta_n=k|_{\partial U(d_n)}:\partial U(d_n)\to X$ and note that $\delta_n(\partial U(d_n))\subseteq \{c_n,e_n\}$. Applying the same argument as in the previous paragraph, we obtain an extension $k_{n}:\ov{U(d_n)}\to \ov{D_n}$ such that $(k_n)|_{\partial U(d_n)}=\delta_n$ and $(k_n)^{-1}(\{c_n,e_n\})=\partial U(d_n)$. In particular, $k_{n}^{-1}(D_n)=U(d_n)$. 

Define $\wt{g}:B\to X$ so that 
\[\wt{g}(b)= \begin{cases}
k(b) & \text{ if }b\in A\cup B_0\\
h_n(b) & \text{ if }b\in U(b_n)\\
k_n(b) & \text{ if }b\in U(d_n)
\end{cases}\]
Certainly, $\wt{g}$ is well-defined as a function and makes the diagram in the statement of the lemma commutes. Since $k$ is continuous, $\wt{g}$ is continuous on $A\cup B_0$. For each $n\in N$ $\wt{g}$ is also continuous on $\ov{U(b_n)}$ and $\ov{U(d_n)}$ by the continuity of $h_n$ and $k_n$ respectively. If $N$ is finite, then $\wt{g}$ is continuous by the Pasting Lemma so we assume that $N$ is infinite. To complete the proof of continuity, it suffices to consider the following case: Suppose $n_1<n_2<n_3<\cdots$ are integers in $N$ and $\{s_k\}_{k\in\bbn}\to s$ is a convergent sequence in $B$ where $s_k\in U(b_{n_k})\cup U(d_{n_k})$ for each $k\in\bbn$. Since the sets $U(b_n)\cup U(d_n)$, $n\in N$ are open and disjoint in $B$, we must have $s\in A\cup B_0$. Note that $g(s_k)\in\{b_{n_k},d_{n_k}\}$ for some $n_k\in N$ and $\{g(s_k)\}_{k\in\bbn}\to g(s)$ in $\scrc X$ (by the continuity of $g$) where $g(s)\in\wild(X)\cup\{c_n\mid n\in N\}$. By our construction of $\wt{g}$, we have $\wt{g}(s_k)\in B_{n_k}\cup D_{n_k}\subseteq O_{n_k}$ for each $k\in\bbn$.

If $g(s)=c_m$ for $m\in N$, then $\wt{g}(s)=c_m$ and the sequence $\{n_k\}_{k\in\bbn}$ of integers is eventually constant at some $m$. Thus there exists $k_0$ such that $g(s_k)\in \{b_m,d_m\}$ for all $k\geq k_0$. Thus $s_k\in U(b_m)\cup U(d_m)$ for all $k\geq k_0$. By the Pasting Lemma and continuity of $h_m$ and $k_m$, $\wt{g}$ is continuous on $\ov{U(b_m)}\cup \ov{U(d_m)}$. Thus $\{\wt{g}(s_k)\}_{k\in\bbn}\to \wt{g}(s)$.

Otherwise, $g(s)=x\in \wild(X)$. Let $V$ be path-connected open neighborhood of $x$ in $X$. By Lemma \ref{openlemma}, $q(V)$ is open in $\scrc X$. Thus there exists $k_0$ such that $g(s_k)\in q(V)$ for all $k\geq k_0$. Since $g(s_k)\in \{b_{n_k},d_{n_k}\}$ and the set $\{a_{n_k},e_{n_k}\}$ separates $\scrc X$, any path in $\scrc X$ starting at $x$ and ending in $\{b_{n_k},d_{n_k}\}$ must have image that meets $\{a_{n_k},e_{n_k}\}$. Since $q(V)$ is a path-connected open neighborhood of $x$, for each $k\geq k_0$ we must therefore have $q(V)\cap \{a_{n_k},e_{n_k}\}\neq \emptyset$. But since $ V\cap \wild(X)=q(V)\cap \wild(X)$, we conclude that $V\cap \{a_{n_k},e_{n_k}\}=V\cap \partial O_{n_k}\neq\emptyset$ for all $k\geq k_0$. But Condition (3) of Lemma \ref{opencharlemma} gives that $\ov{O_{n_k}}\subseteq V$ for all but finitely many $k\geq k_0$. Thus $\wt{g}(s_k)\in  B_{n_k}\cup D_{n_k}\subseteq \ov{O_{n_k}}\subseteq V$ for all but finitely many $k$.
\end{proof}

Applying Lemma \ref{liftinglemma} to the pairs $(B,A)=(I^{n+1},I^n\times\{0\})$ and $(B,A)=(I^{n+1},\partial I^{n+1})$ for all $n\geq 0$, we have the following.

\begin{corollary}\label{coarsificationcor}
For any one-dimensional Peano continuum $X$ in standard form, the coarsification map $q_{\bbe}:X\to \scrc X$ is a Serre fibration and a weak homotopy equivalence.
\end{corollary}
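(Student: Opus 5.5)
The plan is to derive both conclusions directly from Lemma \ref{liftinglemma}, specialized to cubes. The cubes $I^{n+1}$ are metrizable and locally path-connected, and $I^n\times\{0\}$ and $\partial I^{n+1}$ are closed in them, so the lemma applies to both pairs.

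\textbf{Serre fibration.} Take a map $g:I^{n+1}=I^n\times I\to \scrc X$ (a homotopy of $I^n$) and a lift $f:I^n\times\{0\}\to X$ of its restriction to the bottom face, so that $q\circ f=g|_{I^n\times\{0\}}$. Lemma \ref{liftinglemma} with $(B,A)=(I^{n+1},I^n\times\{0\})$ gives $\wt g:I^{n+1}\to X$ with $\wt g|_{I^n\times\{0\}}=f$ and $q\circ\wt g=g$. This is exactly the homotopy lifting property for cubes, for every $n\geq 0$. Hence $q$ is a Serre fibration.

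\textbf{Surjectivity on homotopy.} First, $q$ is surjective, so $q_\#$ is onto $\pi_0$. It is injective on $\pi_0$ because $X$ is path-connected; alternatively, apply the lemma to $(I,\partial I)$, which shows that any path in $\scrc X$ between $q$-images lifts. For $n\geq1$, represent a class in $\pi_n(\scrc X,q(x))$ by a map $g:(I^n,\partial I^n)\to(\scrc X,q(x))$. Put $A=\partial I^n$ with $f$ the constant map at $x$, so $q\circ f=g|_A$. The lemma gives a lift $\wt g:(I^n,\partial I^n)\to (X,x)$ with $q\circ\wt g=g$. Thus $q_\#$ is surjective.

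\textbf{Injectivity on homotopy.} Let $\alpha:(I^n,\partial I^n)\to(X,x)$ satisfy $q\circ\alpha\simeq$ const rel $\partial I^n$, via $G:I^{n+1}\to\scrc X$. Here $G$ is $q\circ\alpha$ on $I^n\times\{0\}$ and is $q(x)$ on the rest of $\partial I^{n+1}$. Define $f:\partial I^{n+1}\to X$ to be $\alpha$ on the bottom face and constant at $x$ on the other faces. This is continuous because $\alpha$ equals $x$ on $\partial I^n$, and it satisfies $q\circ f=G|_{\partial I^{n+1}}$. Lemma \ref{liftinglemma} with $(B,A)=(I^{n+1},\partial I^{n+1})$ gives $\wt G$ extending $f$, which is a null-homotopy of $\alpha$ rel $\partial I^n$. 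So $q_\#$ is injective.

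\textbf{Expected obstacle and basepoints.} Essentially nothing beyond Lemma \ref{liftinglemma} is needed. The only care required is in checking the compatibility condition $q\circ f=g|_A$ in each case. One must also note that the basepoints $q(x)$ range over all of $\scrc X$, since $q$ is surjective, which is what the definition of weak homotopy equivalence at points of $X$ requires.
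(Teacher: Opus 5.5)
Your proposal is correct and is essentially the paper's argument: the paper obtains the corollary by applying Lemma \ref{liftinglemma} to the pairs $(I^{n+1},I^n\times\{0\})$ and $(I^{n+1},\partial I^{n+1})$. You have written out the routine verifications it leaves implicit (homotopy lifting, lifting spheres, and lifting null-homotopies rel boundary), and your closing remark about surjectivity of $q$ is harmless but unnecessary, since the definition only requires isomorphisms at basepoints $q(x)$ with $x\in X$.
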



\section{Proofs of Main Results}

The complete the proofs of our main results, we first recall a Theorem of K. Eda.

\begin{theorem}\cite[Theorem 1.1]{EdaSpatial}\label{edaspatialthm}
If $X$ is a one-dimensional Peano continuum and $\psi:\pi_1(\bbe,b_0)\to \pi_1(X,x_0)$ is a homomorphism, then there exists a map $g:\bbe\to X$ and a path $\alpha:\ui\to X$ from $\alpha(0)=g(b_0)$ to $\alpha(1)=x_0$ such that $\varphi_{\alpha}\circ g_{\#}=\psi$ where $\varphi_{\alpha}:\pi_1(X,g(b_0))\to \pi_1(X,x_0)$, $\varphi_{\alpha}([\gamma])=[\ov{\alpha}\gamma\alpha]$ is the basepoint-change isomorphism.
\end{theorem}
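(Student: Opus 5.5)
Since this is Eda's theorem, cited from \cite{EdaSpatial}, the following is only a sketch of how I would reprove it. The plan is to build $g$ circle by circle from reduced representatives of the loops $\psi([\ell_k])$, after one global basepoint change. The hard part is an ``automatic continuity'' statement for $\psi$. Throughout I use the uniqueness of reduced representatives (Remark \ref{reducedrepremark}) and the injectivity of $\phi:\pi_1(\bbe,b_0)\to\varprojlim F_n$.

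\textbf{Step 1 (shrinking).} For $k\in\bbn$ let $\gamma_k$ be the reduced representative of $\psi([\ell_k])$, based at $x_0$. I would aim to prove the following claim. There is a path $\alpha$ ending at $x_0$, with start point $y=\alpha(0)$, and an index $K$ such that for all $k\geq K$ the reduced loop $\gamma_k$ is, up to reparameterization, $\ov{\alpha}\beta_k\alpha$ with the subpath cancellation done. Here each $\beta_k$ is a reduced loop at $y$, and $\diam(\im\beta_k)\to 0$.

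To get this, argue by contradiction. Suppose that for every candidate $y$ and $\alpha$ the loops fail to shrink. Choose $k_1<k_2<\cdots$ and exponents $m_i$, and form the infinite product $\sigma=\prod_i \ell_{k_i}^{m_i}$ together with its tails $\sigma_j=\prod_{i\geq j}\ell_{k_i}^{m_i}$. In $\pi_1(\bbe,b_0)$ these satisfy $[\sigma_j]=[\ell_{k_j}]^{m_j}[\sigma_{j+1}]$, so $\psi([\sigma_j])=[\gamma_{k_j}]^{m_j}\psi([\sigma_{j+1}])$.

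Now compare reduced representatives, which are unique. The reduced loop $\rho_1$ for $\psi([\sigma])$ is a single path of bounded complexity, and $\rho_1$ is obtained from $\gamma_{k_1}^{m_1}\cdots\gamma_{k_j}^{m_j}\rho_{j+1}$ by cancellation. Choosing the $m_i$ large, and using that the $\gamma_{k_i}$ are not shrinking, should force $\rho_1$ to contain infinitely many disjoint subarcs of diameter bounded below. That is impossible for a path in a compact metric space.

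Extracting $y$ and $\alpha$ requires one more observation: the common cancelling prefixes $\ov{\alpha_k}$ of the $\gamma_k$ must stabilize. I would obtain this from a second infinite-product argument of the same kind, using products like $\prod_i \ell_{k_i}\ell_{k_{i+1}}^{-1}$.

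\textbf{Step 2 (defining $g$).} Define $g:\bbe\to X$ by $g(b_0)=y$. On $\mcc_k$ with $k\geq K$, let $g$ traverse $\beta_k$. On $\mcc_k$ with $k<K$, let $g$ traverse the reduced representative of $\alpha\gamma_k\ov{\alpha}$. Then $g$ is continuous at $b_0$ because $\diam(\beta_k)\to0$ and $\beta_k(0)=y$. By construction, $\varphi_\alpha\circ g_\#$ and $\psi$ agree on every $[\ell_k]$.

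\textbf{Step 3 (agreement on all of $\pi_1$).} Take an arbitrary $[\sigma]\in\pi_1(\bbe,b_0)$. For each $n$, write $[\sigma]=w_n[\sigma_n]$, where $w_n$ is a word in $\ell_1,\dots,\ell_n$ and $\sigma_n$ has image in $\bigcup_{k>n}\mcc_k$. The two homomorphisms agree on $w_n$. Applying the Step 1 argument to the subearring $\bigcup_{k>n}\mcc_k$ shows that both $\psi([\sigma_n])$ and $\varphi_\alpha g_\#([\sigma_n])$ have reduced representatives of the form $\ov{\alpha}\tau\alpha$ with $\tau$ contained in a small neighborhood of $y$, and this neighborhood shrinks as $n\to\infty$.

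So the reduced representatives of $\psi([\sigma])$ and $\varphi_\alpha g_\#([\sigma])$ coincide outside an arbitrarily small neighborhood of the point where $\alpha$ is spliced in. I would conclude they are equal, either from uniqueness of reduced paths combined with the one-dimensionality of $X$ (as in \cite{CConedim}), or via a shape-injectivity argument (\cite{CConedim,EK98,MM}).

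The main obstacle is Step 1, the shrinking and stabilization of the prefixes. Steps 2 and 3 are essentially bookkeeping with reduced paths once Step 1 is in hand.
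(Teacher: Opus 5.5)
The paper gives no proof of this theorem; it is quoted from Eda. Your sketch therefore has to stand on its own, and it has a genuine gap exactly where you locate the difficulty: the mechanism you propose in Step~1 cannot produce a contradiction.

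The only information your argument extracts from $\psi$ is the chain of identities $\psi([\sigma_j])=[\gamma_{k_j}]^{m_j}\psi([\sigma_{j+1}])$, and these impose no constraint. For any exponents, the assignment $x_1=1$, $x_{j+1}=\bigl([\gamma_{k_1}]^{m_1}\cdots[\gamma_{k_j}]^{m_j}\bigr)^{-1}$ satisfies all of them. So any contradiction derived from them alone would also arise for this assignment, where $\rho_1$ is constant and contains no subarcs at all. Concretely, the unknown reduced loops $\rho_{j+1}$ may cancel the entire prefix $\gamma_{k_1}^{m_1}\cdots\gamma_{k_j}^{m_j}$, and nothing forces any piece of a $\gamma_{k_i}$ to survive in $\rho_1$. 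Taking the $m_i$ large does not help. Since $\rho_{j+1}$ depends on all the later exponents $m_{j+1},m_{j+2},\dots$, choosing $m_j$ large relative to $\rho_{j+1}$ is circular. The same objection applies to your second infinite-product argument for stabilizing the prefixes.

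What is missing is a genuine automatic-continuity input. It must use more of the group $\pi_1(\bbe,b_0)$ than these recursive identities, together with the structure of the target. One example is the noncommutative slenderness of free groups (Higman, Eda), applied to $p_{\#}\circ\psi$ for finite-graph approximations $p\colon X\to N$. This shows that each such composite kills the tail subgroup $\pi_1(\bigcup_{k>m}\mcc_k,b_0)$ for $m$ large. Even then, localizing the tail images at a single point $y$ with a single conjugating path $\alpha$ requires a further argument with reduced paths. That localization, not the bookkeeping of Steps~2--3, is the substance of Eda's theorem.

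Step~3 has a second, related gap. Even if Step~1 is granted, it controls only the images of the individual circles, and agreement on the $[\ell_k]$ does not automatically give agreement on all of $\pi_1(\bbe,b_0)$. For example, take the winding-number map $\pi_1(\bbe,b_0)\to\mathbb{Z}^{\bbn}$, then the quotient by $\bigoplus_{\bbn}\mathbb{Z}$, then a suitable map to $\mathbb{Q}$. This is a nontrivial homomorphism that kills every $[\ell_k]$.

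Your shape-injectivity conclusion actually needs a stronger statement: $\psi$ maps the whole tail subgroup $\pi_1(\bigcup_{k>n}\mcc_k,b_0)$ into $\varphi_{\alpha}$ of the image of $\pi_1(U_n,y)$ in $\pi_1(X,y)$, with neighborhoods $U_n$ shrinking to $y$. ``Applying Step~1 to the subearring'' does not give this, since that subearring's generators are still just the $\ell_k$. You could run Step~1 on $\psi\circ\theta_{\#}$, where $\theta\colon\bbe\to\bbe$ sends the odd circles to the $\ell_j$ and the even circles to the tails $\sigma_j$. You would then still have to show, for instance via Lemma \ref{noslidinglemma}, that the resulting point and path agree with $y$ and $\alpha$.

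Incidentally, $[\sigma]$ is in general not of the form $w_n[\sigma_n]$. Since $\pi_1(\bbe,b_0)$ is the free product of $\pi_1(X_n,b_0)$ and the tail subgroup, $[\sigma]$ is an alternating product $u_1t_1\cdots u_rt_r$ of elements of these two factors. This is harmless once the tail statement is available.
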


We note that there is a uniqueness part to Theorem \ref{edaspatialthm}. However, we extract from this a more specialized statement that applies directly to our situation.

\begin{lemma}\cite[Prop. 9.13]{BFpants}\label{noslidinglemma}
If $X$ is a one-dimensional metric space, $f_1,f_2:\bbe\to X$ are fully essential maps, and $\alpha:\ui\to X$ is a path from $f_2(b_0)$ to $f_1(b_0)$ such that $f_1\circ\ell_j\simeq \ov{\alpha}(f_2\circ \ell_j)\alpha$ for all $j\in\bbn$, then $\alpha(0)=\alpha(1)$.
\end{lemma}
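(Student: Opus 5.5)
The plan is to argue by contradiction. Suppose $x_2=\alpha(0)=f_2(b_0)$ and $x_1=\alpha(1)=f_1(b_0)$ are distinct. By Remark \ref{reducedrepremark} we may replace $\alpha$ by its reduced representative rel. endpoints; this changes none of the hypotheses. Since $x_1\neq x_2$, this $\alpha$ is non-constant.

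I will use that a non-constant reduced path is constant on no subinterval, since a constant subpath is a null-homotopic subloop. Fix a metric $d$ and put $\epsilon_j=\diam(\im(f_i\circ\ell_j))$ for $i=1,2$, taking the larger of the two. Continuity of $f_i$ at $b_0$, together with the circles $\mcc_j$ shrinking to $b_0$, gives $\epsilon_j\to 0$.

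For each $j$, let $\beta_j$ be the reduced representative of the loop $f_2\circ\ell_j$. By Remark \ref{reducedrepremark}, $\im(\beta_j)\subseteq \im(f_2\circ\ell_j)\subseteq \ov{B}_d(x_2,\epsilon_j)$, and $\beta_j$ is non-constant because $f_2$ is fully essential. Now consider the reduced representative $\lambda_j$ of $\ov{\alpha}\beta_j\alpha$. In a one-dimensional space, the reduced form of a concatenation of reduced paths is obtained by cancelling maximal mutually inverse segments at the junctions (the Cannon--Conner theory behind Remark \ref{reducedrepremark}). Hence $\lambda_j$ has the form
\[
\lambda_j=\ov{\alpha|_{[t_j,1]}}\,\beta_j'\,\alpha|_{[t_j,1]}.
\]
Here $\alpha|_{[0,t_j]}$ is the longest initial segment of $\alpha$ that is cancelled against an end segment of $\beta_j$ (symmetrically, at the start of $\beta_j$), and $\beta_j'$ is what remains of $\beta_j$.

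The cancellation cannot consume all of $\beta_j$. If it did, $\beta_j$ would be path-homotopic to a path of the form $\gamma\ov{\gamma}$ with $\gamma$ an initial segment of $\alpha$. Such a path is null-homotopic, contradicting essentiality. Since cancelled segments coincide up to reparametrization, $\alpha([0,t_j])\subseteq\im(\beta_j)\subseteq \ov{B}_d(x_2,\epsilon_j)$. Because $\alpha$ is constant on no interval and $\alpha(0)=x_2$, the first exit time of $\alpha$ from $\ov{B}_d(x_2,\epsilon_j)$ tends to $0$. Therefore $t_j\to 0$.

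Now compare with $f_1$. By hypothesis, $\lambda_j$ is path-homotopic to $f_1\circ\ell_j$. By uniqueness of reduced representatives (Remark \ref{reducedrepremark}), $\lambda_j$ is a reparametrization of the reduced representative of $f_1\circ\ell_j$, so
\[
\im(\lambda_j)\subseteq \im(f_1\circ\ell_j)\subseteq \ov{B}_d(x_1,\epsilon_j).
\]
On the other hand, $\im(\lambda_j)\supseteq\alpha([t_j,1])\ni\alpha(t_j)$, and $\alpha(t_j)\to x_2$. Hence $d(x_2,x_1)\le \lim_j\bigl(d(\alpha(t_j),x_2)+\epsilon_j\bigr)=0$, which contradicts $x_1\neq x_2$. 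Therefore $\alpha(0)=\alpha(1)$.

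The main obstacle is the structural description of $\lambda_j$: that reducing $\ov{\alpha}\beta_j\alpha$ only cancels matching segments at the two junctions, and that the cancelled part of $\alpha$ literally lies in $\im(\beta_j)$. I would derive this from the uniqueness and image-preserving properties in \cite[Theorem 3.9]{CConedim}. Concretely, I would reduce $\ov{\alpha}\beta_j$ first by locating the maximal $t$ for which $\alpha|_{[0,t]}$ reparametrizes an end segment of $\beta_j$. Then I would check that the result, concatenated with $\alpha$, needs only the symmetric cancellation.
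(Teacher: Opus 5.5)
\textbf{There is a genuine gap: the normal form you assert for $\lambda_j$ is false, and the step that rests on it fails.} The paper quotes this lemma without proof, so there is no in-paper proof to compare with.

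You claim $\lambda_j=\ov{\alpha|_{[t_j,1]}}\,\beta_j'\,\alpha|_{[t_j,1]}$, with $\alpha|_{[0,t_j]}$ cancelled against an end segment of $\beta_j$ and $\beta_j'$ a remnant of $\beta_j$. This fails in general. The two junctions need not cancel the same amount, and once $\beta_j$ is used up the cancellation can continue between the two copies of $\alpha$. So ``the cancellation cannot consume all of $\beta_j$'' is wrong, and so is its justification. Asymmetric cancellation only gives $\beta_j\equiv\alpha|_{[0,s]}\ov{\alpha|_{[0,r]}}$ (equality up to reparametrization) with possibly $s\neq r$, and that can be essential.

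For a concrete case, let $\gamma$ be an arc from $x_2$ to $p$, $\omega$ a simple closed curve through $p$, and $\tau$ an arc from $p$ to $x_1$, meeting only at $p$. Take $\beta=\gamma\omega\ov{\gamma}$ and $\alpha=\gamma\,\ov{\omega}\,\ov{\omega}\,\tau$. Then $\ov{\alpha}\beta\alpha\simeq\ov{\tau}\omega\omega\omega\,\ov{\omega}\,\ov{\omega}\tau$, whose reduced representative is $\ov{\tau}\omega\tau$. All of $\beta$ has been cancelled. The removed initial segment $\gamma\ov{\omega}\,\ov{\omega}$ of $\alpha$ is not the reverse of an end segment of $\beta$, because its second $\ov{\omega}$ cancels against $\ov{\alpha}$. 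Hence your inclusion $\alpha([0,t_j])\subseteq\im(\beta_j)$, which relies on ``cancelled segments coincide,'' is unsupported exactly when $\alpha$ overlaps itself. Your planned check that only symmetric cancellation is needed would fail.

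What is missing is an argument that controls such self-overlaps of $\alpha$. Here is one way to supply it.
\begin{enumerate}
\item Fix $j$ with $\ov{B}_d(x_1,\epsilon_j)\cap\ov{B}_d(x_2,\epsilon_j)=\emptyset$, and let $\lambda_j$ be the reduced representative of $f_1\circ\ell_j$.
\item Reduce both sides of $\alpha\lambda_j\simeq\beta_j\alpha$ using the two-path cancellation lemma. For reduced $\gamma,\delta$ with $\gamma(1)=\delta(0)$ it gives $\gamma\equiv\gamma_0\sigma$ and $\delta\equiv\ov{\sigma}\delta_0$ with $\gamma_0\delta_0$ reduced; it follows from uniqueness of reduced representatives of paths.
\item Since the balls are disjoint, comparing the two reduced forms yields $p<q$, $p'<q'$ and an increasing homeomorphism $h:[p,q]\to[p',q']$ such that:
 \begin{itemize}
 \item $\alpha\circ h=\alpha|_{[p,q]}$;
 \item $\beta_j\simeq\alpha|_{[0,p]}\ov{\alpha|_{[0,p']}}$ and $\lambda_j\simeq\ov{\alpha|_{[q,1]}}\,\alpha|_{[q',1]}$;
 \item $\alpha([0,\max(p,p')])\subseteq\ov{B}_d(x_2,\epsilon_j)$ and $\alpha(q),\alpha(q')\in\ov{B}_d(x_1,\epsilon_j)$.
 \end{itemize}
\item If $p=p'$, then $\beta_j$ is inessential.
\item Otherwise, iterate $h$ (if $p<p'$) or $h^{-1}$ (if $p'<p$) starting from $\min(p,p')$. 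Each interval between successive iterates is mapped by $\alpha$ into $\ov{B}_d(x_2,\epsilon_j)$, so the iterates never pass $q$ or $q'$. They therefore increase to a fixed point $z<\min(q,q')$.
\item Then $\alpha|_{[z,q]}\equiv\alpha|_{[z,q']}$. Since the reduced path $\alpha$ has no null-homotopic subloop, this forces $q=q'$, and then $\lambda_j$ is inessential.
\end{enumerate}
Either case contradicts full essentiality of $f_1$ or $f_2$.
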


\begin{lemma}\label{choosehomlemma}
If $X$ is a one-dimensional metrizable space and $x\in \wild(X)$, then there exists a map $f:(\bbe,b_0)\to (X,x)$ such that $[f\circ \ell_n]_{h}\neq 0$ in $H_1(X)$ for all $n\in\bbn$.
\end{lemma}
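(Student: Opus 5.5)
The approach is to start from a fully essential map and replace each of its loops by a simple closed curve found inside that loop's image. Since $x\in \wild(X)$, there is a fully essential map $g:(\bbe,b_0)\to (X,x)$. Its loops $g\circ\ell_n$ are essential but may well be null-homologous (commutators, for instance). The key observation is that every essential loop in a one-dimensional metrizable space has image containing a simple closed curve, and a simple closed curve is never null-homologous in $X$. Since each replacement loop stays inside $\im(g\circ \ell_n)$, continuity at $b_0$ comes for free.

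\textbf{Step 1: each $P_n=\im(g\circ\ell_n)$ contains a simple closed curve $S_n$.} The set $P_n$ is a continuous image of $\ui$ in a metrizable space, so it is a Peano continuum. It is one-dimensional, or degenerate, as a subspace of $X$. Suppose $P_n$ contained no simple closed curve. Then $P_n$ would be a dendrite, and dendrites are contractible (even absolute retracts). Hence $g\circ\ell_n$ would be null-homotopic in $P_n$, and therefore in $X$, contradicting full essentiality. If one prefers to avoid citing dendrite theory, Remark \ref{reducedrepremark} offers an alternative: a non-constant reduced representative with image in $P_n$ in a uniquely arcwise connected space must backtrack, which is impossible.

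\textbf{Step 2: a loop around $S_n$ is non-null-homologous in $X$.} Let $\sigma_n:\ui\to S_n$ traverse $S_n$ once. Then $[\sigma_n]_h$ generates $H_1(S_n)\cong\mathbb{Z}$, so it is nonzero in $H_1(S_n)$. By Lemma \ref{hinclusioncor}, applied with $A=S_n$, the inclusion induces an injection $H_1(S_n)\to H_1(X)$, so $[\sigma_n]_h\neq 0$ in $H_1(X)$.

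\textbf{Step 3: base the loops at $x$ and assemble $f$.} Since $P_n$ is a Peano continuum containing $x=g(b_0)$, it is arcwise connected. Choose a path $\delta_n$ in $P_n$ from $x$ to $\sigma_n(0)$, and define $f$ on $\mcc_n$ by $f\circ\ell_n=\delta_n\sigma_n\ov{\delta_n}$. This loop is homologous to $\sigma_n$, so $[f\circ\ell_n]_h\neq0$. The map $f$ is continuous on each circle. At $b_0$, continuity of $g$ gives that for every neighborhood $U$ of $x$ we have $P_n\subseteq U$ for all but finitely many $n$. Since $\im(f|_{\mcc_n})\subseteq P_n$, $f$ is continuous at $b_0$.

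The main obstacle is Step 1, namely justifying that an essential loop's image contains a simple closed curve. I would rely on the classical fact that a Peano continuum with no simple closed curve is a dendrite and hence contractible. The remaining steps are routine.
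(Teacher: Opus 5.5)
Your proof is correct. Its skeleton is the paper's: locate simple closed curves arbitrarily close to $x$, tether each one to $x$ by a path, and use Lemma~\ref{hinclusioncor} to see that the resulting loops are nonzero in $H_1(X)$. The difference is where the curves come from.

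The paper takes a base $U_1\supseteq U_2\supseteq\cdots$ of path-connected open neighborhoods of $x$. It notes that each $U_n$ contains an essential loop, and cites the ``well-known'' fact that a one-dimensional metrizable space with no simple closed curve is simply connected to find $C_n\subseteq U_n$. Continuity at $b_0$ then follows because the $U_n$ form a neighborhood base.

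You instead work inside the images $P_n=\im(g\circ\ell_n)$ of a fully essential map $g$. These are Peano continua, so they are arcwise connected and the dendrite theorem applies to them directly. That is exactly the argument behind the direction of the ``well-known'' fact the paper needs. They also shrink to $x$ by continuity of $g$, so continuity of $f$ is automatic.

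What you gain is that you use nothing beyond the stated hypotheses. The paper's first step presupposes path-connected neighborhoods of $x$, which a one-dimensional metrizable space need not have. This is harmless in the application, Lemma~\ref{techlemma}, where $X$ is locally path-connected, and it could also be repaired by passing to path components.

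What the paper's version buys is a normalization you leave open: its tether is an arc meeting the curve in a single point. This makes $f\circ\ell_n$ a reduced loop, which Lemma~\ref{disjointimagehomologylemma} requires when the construction is reused in Lemma~\ref{techlemma}. You get the same by taking $\delta_n$ to be an arc in $P_n$ stopped at its first point of $S_n$.

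Your parenthetical alternative to dendrite theory via reduced loops is not a complete argument as written, but nothing in your proof depends on it.
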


\begin{proof}
Let $U_1\supseteq U_2\supseteq U_3\supseteq $ be a neighborhood base of path-connected open sets at $x$. It is well-known that a one-dimensional metrizable space is simply connected if and only if it contains no simple closed curve. Since $x\in \wild(X)$, the open set $U_n$ must contain a simple closed curve $C_n$. Let $\gamma_n:\ui\to C_n$ be a loop based at $c_n=\gamma(0)$ that parameterizes $C_n$, i.e. so that $\gamma_{n}^{-1}(c_n)=\{0,1\}$ and $|\gamma_{n}^{-1}(c)|=1$ for $c\neq c_n$. If $x\notin C_n$, find an injective path $\eta_n:\ui\to U_n$ from $x$ to a point $c_n=\eta_n(1)\in C_n$ so that $\im(\eta_n)\cap C_n=\{c_n\}$. In the case that $x\in C_n$, we set $c_n=x$ and take $\eta_n$ to be the constant path at $c_n$. Define $f:(\bbe,b_0)\to (X,x)$ so that $f\circ\ell_n=\eta_n\gamma_n\ov{\eta_{n}}$ for each $n\in\bbn$. With $f$ constructed, we fix $n\in\bbn$. Let $i:C_n\to X$ be the inclusion map and recall from Lemma \ref{hinclusioncor} that the induced homomorphism $i_{\ast}:H_1(C_n)\to H_1(X)$ is injective. Since $[\gamma_n]\neq 0$ in $H_1(C_n)$, $[i\circ \gamma_n]_{h}\neq 0$ in $H_1(X)$. Since $i\circ \gamma_n$ and $f\circ \ell_n$ are clearly homologous in $X$, we have $[f\circ \ell_n]_{h}\neq 0$ in $H_1(X)$.
\end{proof}

\begin{lemma}\label{techlemma}
Let $(X,d)$ be a path-connected, locally path-connected, one-dimensional metric space and $Y$ be any path-connected space. If $f:X\to Y$ is a map that induces an isomorphism on fundamental groups, then the restriction $f|_{\wild(X)}:\wild(X)\to \wild(Y)$ is an injective function.
\end{lemma}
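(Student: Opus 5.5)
Since $f_\#$ is an isomorphism, hence injective on $\pi_1(X,x)$ for every $x$ ($X$ is path-connected, so injectivity at one basepoint gives it at all), $f$ is $\pi_1$-injective. By Lemma \ref{wildintowildlemma}, $f(\wild(X))\subseteq\wild(Y)$, so the restriction is well defined. It remains to show injectivity. Suppose $x_1\neq x_2$ are in $\wild(X)$ with $f(x_1)=f(x_2)=y$. I will derive a contradiction from Lemma \ref{noslidinglemma}.

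\textbf{Step 1 (transport the wild point).} By Lemma \ref{choosehomlemma} choose $g_2:(\bbe,b_0)\to(X,x_2)$ with $[g_2\circ\ell_n]_h\neq0$ for all $n$; in particular $g_2$ is fully essential. Since $X$ is path-connected, choose a path $\alpha$ in $X$ from $x_2$ to $x_1$. Then $f\circ\alpha$ is a loop at $y$ in $Y$. Since $f_\#:\pi_1(X,x_1)\to\pi_1(Y,y)$ is surjective, there is a loop $\lambda$ at $x_1$ with $f\circ\lambda\simeq f\circ\alpha$ rel endpoints. Replace $\alpha$ by $\alpha'=\alpha\ov{\lambda}$ if needed, or rather keep $\alpha$ and use $\lambda$ to correct the conjugation below.

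\textbf{Step 2 (build the competitor map at $x_1$).} Consider the homomorphism
\[\psi:\pi_1(\bbe,b_0)\to\pi_1(X,x_1),\qquad \psi([\gamma])=[\lambda\,\ov{\alpha}\,(g_2\circ\gamma)\,\alpha\,\ov{\lambda}].\]
Then $f_\#\circ\psi=\varphi\circ(f\circ g_2)_\#$, where $\varphi$ is conjugation by the loop $f\circ(\alpha\ov\lambda)$, which is null-homotopic. So $f_\#\psi=(f\circ g_2)_\#$ as maps into $\pi_1(Y,y)$.

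By Theorem \ref{edaspatialthm} (here I need $X$ to be a Peano continuum; in general I would localize to a Peano subcontinuum containing the relevant images, using local path-connectedness and Lemma \ref{inclusionlemma}), there is a map $g_1:\bbe\to X$ and a path $\beta$ from $g_1(b_0)$ to $x_1$ with $\varphi_\beta\circ g_{1\#}=\psi$. Then $\alpha\ov\lambda\,\ov\beta$ is a path from $x_2$ to $g_1(b_0)$ satisfying $g_1\circ\ell_j\simeq\ov{(\alpha\ov\lambda\ov\beta)}(g_2\circ\ell_j)(\alpha\ov\lambda\ov\beta)$. Hence $g_1$ is fully essential, and Lemma \ref{noslidinglemma} gives $g_1(b_0)=x_2$. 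That is not yet a contradiction.

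\textbf{Step 3 (the real contradiction).} The cleaner route is to apply Lemma \ref{noslidinglemma} directly to $g_2$ and the conjugate by $\alpha\ov\lambda$, which is a path from $x_2$ to $x_1$ with $x_1\neq x_2$. To do this, I need a fully essential $g_1$ based at $x_1$ itself whose loops are homotopic to the conjugates of the $g_2\circ\ell_j$. I would obtain it by requiring the output of Theorem \ref{edaspatialthm}, after composing with $f$, to agree with $f\circ g_2$ conjugated, and by using the homological nontriviality from Lemma \ref{choosehomlemma} together with Lemma \ref{disjointimagehomologylemma}. Concretely, I would take the loops $g_2\circ\ell_n$, which shrink into small neighborhoods of $x_2$, and compare them with the loops $g_1\circ\ell_n$, which lie near $g_1(b_0)$. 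For large $n$ their images are disjoint if the basepoints differ. Since the two loops are conjugate, they are homologous, and Lemma \ref{disjointimagehomologylemma} then forces $g_1(b_0)$ to accumulate at $x_2$, which is consistent with Step 2.

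To reach the contradiction, I instead run the same argument with the roles of $X$ and $Y$ linked through $f$. The homomorphism $f_\#^{-1}\circ (f\circ g_2)_\#$ into $\pi_1(X,x_1)$ is, up to conjugation by $\lambda$, the one induced by a map at $x_1$, namely $\psi$. The element $[\alpha\ov\lambda]$ must commute appropriately, and I use $f_\#$ injectivity to conclude that $\alpha\ov\lambda$ conjugates $g_2$ to a map based at $x_1$. The no-sliding lemma then says its endpoints agree, so $x_1=x_2$.

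\textbf{Main obstacle.} The hardest step is producing a genuine fully essential map based exactly at $x_1$ that realizes the conjugated homomorphism. Theorem \ref{edaspatialthm} only gives a map up to basepoint change, and this is where the homology lemmas are needed to control where the basepoint lands.
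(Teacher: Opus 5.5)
There is a genuine gap, and it is exactly the step you flag as the main obstacle. In Step 3 you assert that injectivity of $f_\#$ lets you conclude that $\alpha\ov{\lambda}$ conjugates $g_2$ to a map of $\bbe$ based at $x_1$. Injectivity only identifies homotopy classes of loops; it cannot produce a map $(\bbe,b_0)\to(X,x_1)$. Moreover, by Lemma \ref{noslidinglemma}, a fully essential map at $x_1$ whose loops are the $\alpha\ov{\lambda}$-conjugates of the $g_2\circ\ell_j$ would already force $x_1=x_2$, so this step assumes what is to be proved. Step 2 cannot supply such a map either. The homomorphism $\psi$ is already realized by $g_2$ together with the path $\alpha\ov{\lambda}$, so Theorem \ref{edaspatialthm} and Lemma \ref{noslidinglemma} just hand you back a map based at $x_2$, as you noticed.

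More fundamentally, nothing in your argument uses that $x_1\in\wild(X)$, and no argument can work without it. Attach to $\bbe$ an arc meeting it only at $b_0$, with free endpoint $x_1$, and let $f:X\to\bbe$ collapse the arc to $b_0$. Then $f$ is a homotopy equivalence, $x_2=b_0$ is wild and $f(x_1)=f(x_2)$, but $x_1\neq x_2$. Your caveat that Theorem \ref{edaspatialthm} is stated for Peano continua is fair, and the paper uses it the same way, but it is not the real issue.

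The missing idea is to use a fully essential map at $x_1$ to pin the basepoint. Choose also a fully essential $k:(\bbe,b_0)\to(X,x_1)$ and interleave the two maps in $Y$: define $h:\bbe\to Y$ by $h\circ\ell_{2j-1}=f\circ g_2\circ\ell_j$ and $h\circ\ell_{2j}=f\circ k\circ\ell_j$. This is continuous since $f\circ g_2$ and $f\circ k$ are both based at $y$. Apply Theorem \ref{edaspatialthm} to $f_\#^{-1}\circ h_\#:\pi_1(\bbe,b_0)\to\pi_1(X,x_1)$ to get $g:\bbe\to X$ and a path $\delta$ from $g(b_0)$ to $x_1$. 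Injectivity of $f_\#$ gives $\ov{\delta}(g\circ\ell_{2j})\delta\simeq k\circ\ell_j$ for all $j$. So Lemma \ref{noslidinglemma}, applied to $k$ and the even-indexed half of $g$, forces $g(b_0)=x_1$.

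The odd-indexed half $g_3$, defined by $g_3\circ\ell_j=g\circ\ell_{2j-1}$, is then a genuine fully essential map based at $x_1$ with $f_\#([\ov{\delta}(g_3\circ\ell_n)\delta])=[f\circ g_2\circ\ell_n]$. Note that the basepoint is controlled by no-sliding, not by homology. The paper finishes homologically: a commutator computation and the isomorphism $H_1(X)\cong H_1(Y)$ make $g_3\circ\ell_n$ and $g_2\circ\ell_n$ homologous. This contradicts Lemma \ref{disjointimagehomologylemma} for large $n$, which is where Lemma \ref{choosehomlemma} enters.

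Your Step 1 actually gives a shorter finish. Take $\alpha'=\alpha\ov{\lambda}$, so that $f\circ\alpha'$ is null-homotopic. Injectivity of $f_\#$ on $\pi_1(X,x_2)$ then yields $g_3\circ\ell_n\simeq\ov{p}(g_2\circ\ell_n)p$, where $p=\alpha'\ov{\delta}$ is a path from $x_2$ to $x_1$. Lemma \ref{noslidinglemma} now gives $x_1=x_2$ directly, with no homology needed.
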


\begin{proof}
Since $f$ is $\pi_1$-injective, we have $f(\wild(X))\subseteq \wild(Y)$ by Lemma \ref{wildintowildlemma} and thus the restriction $f|_{\wild(X)}:\wild(X)\to \wild(Y)$ is a well-defined continuous function.

Suppose to obtain a contradiction that there exists distinct points $x_0,x_1\in \wild(X)$ such that $f(x_0)=y_0=f(x_1)$. We will take $x_0$ to be the basepoint of $X$. Since $x_0,x_1\in \wild(X)$, there exist fully essential maps $g_i:(\bbe,b_0)\to (X,x_i)$, $i\in\{0,1\}$. We choose $g_1:(\bbe,b_0)\to (X,x_1)$ arbitrarily. Applying Lemma \ref{choosehomlemma}, we construct $g_2:(\bbe,b_0)\to (X,x_0)$ as in the proof of Lemma \ref{choosehomlemma} so that $[g_2\circ \ell_n]_{h}\neq 0$ in $H_1(X)$ for all $n\in\bbn$.

Since $f$ is assumed to induce an isomorphism on fundamental groups, the maps $f\circ g_i:\bbe\to Y$ are both fully essential and based at $y_0$. Thus we may define a fully essential map $h:(\bbe,b_0)\to (Y,y_0)$ by $h\circ\ell_{2j-1}=f\circ g_0\circ \ell_j$ and $h\circ\ell_{2j}=f\circ g_1\circ\ell_j$ for all $j\in\bbn$ (see Figure \ref{fig4}). 

\begin{figure}[h]
\centering \includegraphics[height=2.3in]{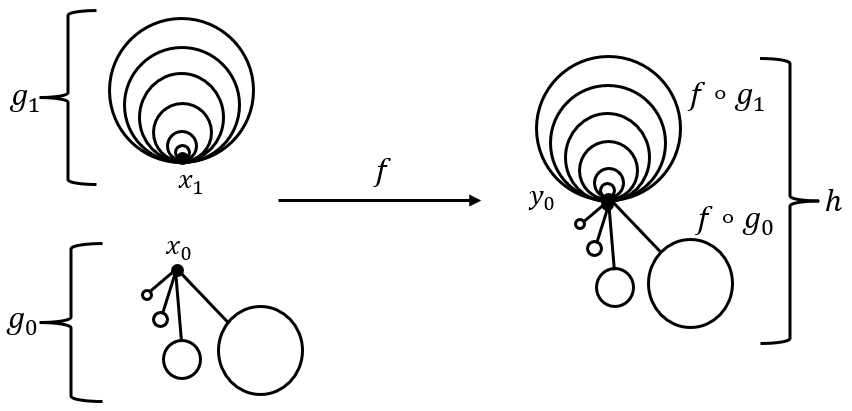}
\caption{\label{fig4} The maps $g_i:(\bbe,b_0)\to (X,x_i)$, $i\in\{0,1\}$ (where $g_0$ is chosen to have a special structure) are composed with $f$ and combined into a single map $h:\bbe\to Y$. }
\end{figure}

Consider the homomorphism $f_{\#}^{-1}\circ h_{\#}:\pi_1(\bbe,b_0)\to \pi_1(X,x_1)$ where $f_{\#}$ is induced by the based map $(X,x_1)\to (Y,y_0)$. By Theorem \ref{edaspatialthm}, there exists a map $g:\bbe\to X$ and a path $\delta:\ui\to X$ from a point $x_2=g(b_0)=\delta(0)$ to $\delta(1)=x_1$ such that $\varphi_{\delta}\circ g_{\#}=f_{\#}^{-1}\circ h_{\#}$.
\[\xymatrix{
\pi_1(X,x_1) \ar[r]^-{f_{\#}}_-{\cong} & \pi_1(Y,y_0) \\
\pi_1(X,x_2) \ar[u]^-{\varphi_{\delta}}_-{\cong} & \pi_1(\bbe,b_0) \ar[l]^-{g_{\#}} \ar[u]_-{h_{\#}}
}\]
For all $j\in\bbn$, we have $f_{\#}([\ov{\delta} (g\circ\ell_j) \delta])=[h\circ \ell_j]$ in $\pi_1(Y,y_0)$. In particular, $f_{\#}([\ov{\delta}(g\circ\ell_{2j})\delta])=[h\circ \ell_{2j}]=f_{\#}([g_1\circ \ell_j])$ in $\pi_1(X,x_1)$ for each $j\in\bbn$. Since $f_{\#}:\pi_1(X,x_1)\to \pi_1(Y,y_0)$ is injective, we have $\ov{\delta} (g\circ\ell_{2j})\delta\simeq g_1\circ \ell_j$. Let $g_2:\bbe\to X$ be the map given by $g_2\circ \ell_j=g\circ\ell_{2j}$ for each $j\in\bbn$. Applying Lemma \ref{noslidinglemma} to the maps $g_1$ and $g_2$, we conclude that $x_2=g(b_0)= g_1(b_0)=x_1$. 

Define $g_3:(\bbe,b_0)\to (X,x_1)$ to satisfy $g_3\circ\ell_j=g\circ \ell_{2j-1}$ for all $j\in\bbn$. Since $x_0\neq x_1$, we may choose $n$ sufficiently large so that $\im(g_0\circ \ell_n)\cap \im(g_3\circ \ell_n)=\emptyset$. Set $a=f_{\#}([\ov{\delta} (g_3\circ\ell_{n})\delta])=[h\circ \ell_{2n-1}]\in \pi_1(Y,y_0)$ and note that $a=[f\circ g_0\circ \ell_n]$. Fix a path $\alpha:\ui\to X$ from $x_0$ to $x_1$ and set $b=[f\circ\alpha]\in \pi_1(Y,y_0)$. Define $L=\alpha(\ov{\delta}(g_3\circ\ell_{n})\delta)\ov{\alpha}\ov{g_0\circ \ell_n}$ and note that $L$ is a loop in $X$ based at $x_0$. Additionally, $[f\circ L]=bab^{-1}a^{-1}$ is a commutator element in $\pi_1(Y,y_0)$. Thus $[f\circ L]_{h}=0$ in $H_1(Y)$. Since $f$ induces an isomorphism on fundamental groups it also induces an isomorphism $f_{\ast}:H_1(X)\to H_1(Y)$ on first homology. Thus $[L]=0$ in $H_1(X)$.

By canceling the inverse pairs in the definition of $L$, we see that $[L]_{h}=[g_3\circ\ell_{n}]_{h}-[g_3\circ\ell_{n}]_{h}$. Thus $[g_3\circ \ell_n]_h=[g_0\circ \ell_n]_h$ in $H_1(X)$. However, this is a contradiction of Lemma \ref{disjointimagehomologylemma} since $g_0\circ \ell_n$ and $g_3\circ \ell_n$ have disjoint images and $g_0\circ \ell_n$ was chosen so that $[g_0\circ \ell_n]\neq 0$.
\end{proof}

\begin{remark}
The restriction map $f|_{\wild(X)}:\wild(X)\to \wild(Y)$ in Lemma \ref{techlemma} need not be bijective. Indeed, the map $f:\bbe\to \bbe\times\ui$, $f(x)=(x,0)$ is a homotopy equivalence but $\wild(\bbe\times\ui)=\{b_0\}\times\ui$ is uncountable.
\end{remark}




Finally, we give a proof of our main result. Recall the statement of Theorem \ref{mainthm} from the introduction.

\begin{proof}[Proof of Theorem \ref{mainthm}]
Let $X$ be a one-dimensional Peano continuum. First, suppose there exists a countable space $Y$ and a weak homotopy equivalence $f:X\to Y$. Since $f$ induces an isomorphism on fundamental groups, the restriction $f|_{\wild(X)}:\wild(X)\to\wild(Y)$ is a well-defined injective function by Lemma \ref{techlemma}. Since $Y$ is countable, $\wild(Y)$ is countable and since $\wild(X)$ injects into $\wild(Y)$, $\wild(X)$ is countable.

For the converse, suppose $\wild(X)$ is countable. By Meilstrup's result (Theorem \ref{standardformtheorem}), we may find a one-dimensional Peano continuum $X'$ that is in standard form and a homotopy equivalence $g:X\to X'$. Let $h:X'\to X$ be a choice of homotopy inverse. By Lemma \ref{techlemma}, $h$ maps $\wild(X')$ injectively into $\wild(X)$. Thus $\wild(X')$ is countable. Hence, by replacing $X$ with $X'$, we may assume that $X$ is in standard form. By Corollary \ref{coarsificationcor}, the coarsification map $q:X\to \scrc X$ is a weak homotopy equivalence. Since $\scrc X$ is a countable set whenever $X$ is countable (see Remark \ref{countablespaceremark}), we set $Y=\scrc X$ and $f=q$ to complete the proof.
\end{proof}

To prove our second main result, we recall another recent result of Katsuya Eda.

\begin{theorem}\cite{Edazerodim}\label{edazerodimtheorem}
Let $X$ and $Y$ be one-dimensional Peano continua for which $\wild(X)$ and $\wild(Y)$ are zero-dimensional. Then $X\simeq Y$ if and only if $\wild(X)\cong \wild(Y)$.
\end{theorem}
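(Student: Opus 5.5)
The plan for Theorem \ref{edazerodimtheorem} has two parts. The forward direction is a rigidity argument built on Lemma \ref{techlemma} and Lemma \ref{noslidinglemma}; it does not use zero-dimensionality. The converse is a normal-form construction, and that is where zero-dimensionality enters.

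\emph{($\Rightarrow$).} Let $f:X\to Y$ be a homotopy equivalence with homotopy inverse $g$, and let $H:X\times\ui\to X$ be a homotopy from $g\circ f$ to $\mathrm{id}_X$.
\begin{enumerate}
\item By Lemma \ref{techlemma}, the restrictions $f|_{\wild(X)}:\wild(X)\to\wild(Y)$ and $g|_{\wild(Y)}:\wild(Y)\to\wild(X)$ are well-defined, continuous and injective.
\item Fix $x\in\wild(X)$ and a fully essential map $k:(\bbe,b_0)\to(X,x)$. Let $\alpha=H(x,\cdot)$, a path from $gf(x)$ to $x$.
\item The homotopy $H$ gives $k\circ\ell_j\simeq\ov{\alpha}(g f k\circ\ell_j)\alpha$ for every $j$.
\item Since $f$ and $g$ are $\pi_1$-isomorphisms, $g\circ f\circ k$ is also fully essential. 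Lemma \ref{noslidinglemma}, applied with $f_1=k$ and $f_2=gfk$, then forces $\alpha(0)=\alpha(1)$, that is, $gf(x)=x$.
\item Symmetrically, $fg$ is the identity on $\wild(Y)$. So $f|_{\wild(X)}$ is a continuous bijection of compact metric spaces (Lemma \ref{closedlemma}), hence a homeomorphism.
\end{enumerate}

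\emph{($\Leftarrow$).} Fix a zero-dimensional compact metric space $K$. The aim is to build a canonical model $Z(K)$ in standard form with $\wild(Z(K))=K$, and to show that every $X$ with $\wild(X)\cong K$ is homotopy equivalent to $Z(K)$. For the model, choose a sequence of finite clopen partitions $\mathscr{P}_1\prec\mathscr{P}_2\prec\cdots$ of $K$ with mesh tending to $0$. For each $n$ and each piece $P\in\mathscr{P}_n$, attach to a chosen point of $P$ a circle of diameter less than $1/n$. Then attach arcs of shrinking diameter joining the chosen points, so that the pieces are linked in a tree-like way and the whole space is path-connected.

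To compare an arbitrary $X$ with the model, I would proceed as follows.
\begin{enumerate}
\item By Theorem \ref{standardformtheorem} and the forward direction, assume $X$ is in standard form, with wild set unchanged up to homeomorphism.
\item Write $X\backslash\wild(X)$ as the union of the open arcs $O_n$, whose diameters tend to $0$ by Lemma \ref{odiameterlemma}.
\item Because $\wild(X)$ is zero-dimensional, each arc $\ov{O_n}$ with distinct endpoints can be slid, along a small path in $X$ near $\wild(X)$, so that it becomes either a loop at a wild point or one of the tree-like connecting arcs.
\item Next, simplify homotopy classes: replace subsets of arcs generating the same local free-group data by a single circle per piece of $\mathscr{P}_n$. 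The fact to exploit is that, near each $x\in\wild(X)$, any two such configurations have isomorphic local $\pi_1$-structure, since both realize the earring-like pattern given by Lemma \ref{choosehomlemma}.
\item Finally, check continuity of the resulting maps $X\to Z(K)$ and $Z(K)\to X$, and of the homotopies between their composites and the identities, via Lemma \ref{opencharlemma}. On $\wild$ they are the given homeomorphism. On each $\ov{O_n}$ they are continuous. Condition (3) is ensured by choosing the slides and replacements with diameter tending to $0$, uniformly in $n$.
\end{enumerate}

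The main obstacle is the converse, specifically the step that shows infinitely many arcs and their homotopy classes can be normalized simultaneously with control on diameters near every wild point. I would attempt this by induction on the level $n$ of the clopen partitions: at each level perform only finitely many slides, each of size less than $2^{-n}$, and assemble the level-by-level homotopies into a single homotopy on $[0,1]$, using the level-$n$ homotopy on the time interval $[1-2^{-n+1},1-2^{-n}]$. Continuity at time $1$ would then follow from the diameter bound together with Corollary \ref{subsequencecor}.
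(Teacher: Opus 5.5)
Your forward direction is correct. The free homotopy $H\circ(k\times\mathrm{id}_{\ui})$ gives $k\circ\ell_j\simeq\ov{\alpha}(gfk\circ\ell_j)\alpha$, and Lemma \ref{noslidinglemma} then forces $gf(x)=x$. With the symmetric statement for $fg$, this makes $f|_{\wild(X)}$ and $g|_{\wild(Y)}$ mutually inverse. Only Lemma \ref{wildintowildlemma} is needed for well-definedness; the injectivity from Lemma \ref{techlemma} is superfluous. This is the rigidity fact the paper quotes from the literature in the proof of Theorem \ref{maincor}. The paper gives no proof of Theorem \ref{edazerodimtheorem} itself; it imports it from Eda.

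The converse has a genuine gap, exactly where the content of the theorem lies. Step 4 is not a valid move. Replacing several arcs or circles by a single circle is not a homotopy equivalence as a local operation ($S^1\vee S^1\not\simeq S^1$). Even at a wild point $p$, trading two circles at $p$ for one works only through a reindexing that moves infinitely many other circles accumulating at $p$, as in $\bbe\vee S^1\cong\bbe$. ``Isomorphic local $\pi_1$-structure'' does not produce maps $X\to Z(K)$ and $Z(K)\to X$, nor homotopies of their composites to the identities. Lemma \ref{choosehomlemma} only gives one homologically nontrivial earring at a point and compares nothing. The analogous statement fails for empty wild sets (a point versus a circle), so this absorption must be used essentially and globally, and your outline never does so. Step 3 is also unsupported. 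Between distinct points of a zero-dimensional wild set every path runs through other arcs $O_m$, so each ``slide'' is a handle slide over other arcs. You give no rule for which arcs become loops and which become tree arcs; bridges, for instance, cannot become loops. You also do not show that the kept arcs together with $\wild(X)$ form a dendrite, nor that $\wild(Z(K))=K$ for your model.

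Two ingredients are missing.
\begin{enumerate}
\item A normal form $X\simeq T\cup\bigcup_i S_i$. Here $T\supseteq K$ is a dendrite with $T\backslash K$ a union of open arcs, and $(S_i)$ is a null sequence of simple closed curves attached at points $p_i\in K$, with every point of $K$ a limit of the $S_i$. This is where your level-by-level slides with diameter bounds belong.
\item A \emph{bijective} matching of circles. Every nonempty clopen subset of $K$ carries infinitely many attached circles in both $X$ and $Z(K)$. So a back-and-forth over $\mathscr{P}_1\prec\mathscr{P}_2\prec\cdots$ works: at stage $k$, alternating sides, match the first unmatched circle with an unmatched circle on the other side attached in the same piece of $\mathscr{P}_k$. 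This yields a bijection $\sigma$ whose attachment-point displacement tends to $0$.
\end{enumerate}
Then define the maps as follows: the given homeomorphism on $K$; each arc of $T$ to the unique arc of the other dendrite between the images of its endpoints; and $S_i$ to $\sigma(S_i)$ conjugated by a short dendrite arc. The composites are homotopic to the identities rel $K$, by straightening paths inside the dendrite and contracting the short conjugating loops. Continuity follows from the null-sequence bounds and can be checked via Lemma \ref{opencharlemma}. Bijectivity is essential: an injection of circles such as $\ell_n\mapsto\ell_{2n}$ on $\bbe$ induces a map that is not even $\pi_1$-surjective.
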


We conclude with a proof of our second main result. Recall the statement of Theorem \ref{maincor} from the introduction.

\begin{proof}[Proof of Theorem \ref{maincor}]
Given a compact countable ordinal $\lambda$, fix an order-preserving embedding $f:\lambda\to \ui$ with $f(0)=0$ and $f(\max(\lambda))=1$ and identify $\lambda$ with it's image in $\ui$ under $f$. Fix an injection $g:\lambda\to \bbn$.

For each ordinal $\kappa\leq \lambda$, let $E_{\kappa}\subseteq \ui\times [0,\infty)$ be a homeomorphic copy of $\bbe$ satisfying $\wild(E_{\kappa})=\{(f(\kappa),0)\}$ and $diam(E_{\kappa})<2^{-g(\kappa)}$. Moreover, we choose the countable collection of spaces $E_{\kappa}$, $\kappa\leq \lambda$ to be pairwise-disjoint. Define $A_{\lambda}=\ui\times\{0\}\cup \bigcup_{\kappa\leq \lambda}E_{\kappa}$. Note that $A_{\lambda}$ is a one-dimensional Peano continuum in standard form with countable, compact, and zero-dimensional wild set $\wild(A_{\lambda})=\lambda$. If $\lambda '$ is another ordinal homeomorphic to $\lambda$, then $\wild(A_{\lambda '})=\lambda '\cong\lambda= \wild(A_{\lambda})$ and thus $Ho(A_{\lambda})=Ho(A_{\lambda'})$ by Theorem \ref{edazerodimtheorem}. Hence, there is a well-defined function making the assignment $[\lambda]\to Ho(A_{\lambda})$ of the homeomorphism class $[\lambda]$ of a compact countable ordinal $\lambda$ to the homotopy type $Ho(A_{\lambda})$. We claim that this function is a bijection.

If $\lambda,\lambda'$ are compact countable ordinals and $Ho(A_{\lambda})=Ho(A_{\lambda'})$, then there is a homotopy equivalence $h:A_{\lambda}\to A_{\lambda'}$. It is known that a homotopy equivalence of one-dimensional Peano continua restricts to a homeomorphism on wild sets \cite[Theorem 9.15]{BFpants}. Hence, $\lambda=\wild(A_{\lambda})\cong \wild(A_{\lambda'})=\lambda$. This proves injectivity.

To complete the proof, we must show that the image of the function $[\lambda]\to Ho(A_{\lambda})$ is precisely the set of homotopy types of one-dimensional Peano continua that are weakly homotopy equivalent to a countable space. For given $\lambda$, $A_{\lambda}$ is a one-dimensional Peano continuum with countable wild set $\wild(A_{\lambda})$. It follows from Theorem \ref{mainthm} that $A_{\lambda}$ is weakly homotopy equivalent to a countable space. On the other hand, suppose $X$ is a one-dimensional Peano continuum with countable wild set. Then $\wild(X)$ is a compact countable metrizable space. It is well-known that every compact countable metrizable space is homeomorphic to a countable ordinal. Hence $\wild(X)\cong \lambda$ for a countable compact ordinal. Since $A_{\lambda}$ is a one-dimensional Peano continuum with $\wild(A_{\lambda})\cong \wild(X)$, we have $Ho(X)=Ho(A_{\lambda})$ by Theorem \ref{edazerodimtheorem}.
\end{proof}

\end{document}